\title{On simple connectivity of random 2-complexes}
\author{Zur Luria\thanks{Israel Institute for Advanced Studies. e-mail: zluria@gmail.com~.} \and  {Yuval Peled\thanks{Department of Computer Science, Hebrew University, Jerusalem 91904,
    Israel. e-mail: yuvalp@cs.huji.ac.il~.}
}}
\date{\today}
\newcommand{\ignore}[1]{}
\newcommand{\E}{\ensuremath{\mathbb E}}
\newcommand{\F}{\ensuremath{\mathbb F}}
\newcommand{\STIR}[1]{\ensuremath{\left(\frac {#1}e \right)^{#1}\sqrt{2\pi #1}}}
\newtheorem{theorem}{Theorem}[section]
\newtheorem{lemma}[theorem]{Lemma}
\newtheorem{claim}[theorem]{Claim}
\newtheorem{conjecture}[theorem]{Conjecture}
\newtheorem{definition}[theorem]{Definition}
\newtheorem{remark}[theorem]{Remark}
\begin{document}
\maketitle
\begin{abstract}
The fundamental group of the $2$-dimensional Linial-Meshulam random simplicial complex $Y_2(n,p)$ was first studied by Babson, Hoffman and Kahle. They proved that the threshold probability for simple connectivity of $Y_2(n,p)$ is about $p\approx n^{-1/2}$. In this paper, we show that this threshold probability is at most $p\le (\gamma n)^{-1/2}$, where $\gamma = 4^4/3^3$, and conjecture that this threshold is sharp.

In fact, we show that $p=(\gamma n)^{-1/2}$ is a sharp threshold probability for the stronger property that every cycle of length $3$ is the boundary of a subcomplex of $Y_2(n,p)$ that is homeomorphic to a disk. Our proof uses the Poisson paradigm, and relies on a classical result of Tutte on the enumeration of planar triangulations.  
\end{abstract}
\section{Introduction}
The binomial model $Y_2(n,p)$ of random $2$-dimensional simplicial complexes was introduced by Linial and Meshulam more than a decade ago ~\cite{LM}. A complex in this model has $n$ vertices, a complete $1$-dimensional skeleton, and every $2$-dimensional face is included independently with probability $p$. The theory of random complexes studies their typical  properties when the number of vertices $n$ is large. More precisely, for a given function $p=p(n)$, we 
say that $Y_2(n,p)$ has some property {\em asymptotically almost surely}
(abbreviated {\em a.a.s}) if the probability of having this property tends to $1$ as 
$n\to\infty$. We say that a monotone property $P$ has a {\em sharp threshold} at $p=p(n)$ if there exists some negligible function $\varepsilon=\varepsilon(n)\to 0$ for which a.a.s.\ $Y_2(n,(1+\varepsilon)p)$ has the property $P$, and $Y_2(n,(1-\varepsilon)p)$ does not.

For example, Linial and Meshulam showed that 
homological connectivity of $Y=Y_2(n,p)$ over $\F_2$, i.e., $H_1(Y;\F_2)=0$, has a sharp threshold at $p=2\log n/n$. 
This is a $2$-dimensional analog of the classical result by Erd\H{o}s and R\'enyi which asserts that connectivity of a random $G(n,p)$ graph has a sharp threshold at $p=\log n/n$, and it has been generalized further to homologies over other rings and to higher dimensions \cite{MW,HKP,HKP_GAR,LuP}. 

In this paper we are concerned with the stronger notion of \emph{simple connectivity}. A path connected topological space $Y$ is simply connected if its fundamental group vanishes, or in other words, if every closed loop in $Y$ can be contracted to a point. The fundamental group of $Y=Y_2(n,p)$  was first studied by Basbon, Hoffman and Kahle \cite{BHK}. Their main result is that for every constant $\varepsilon>0$, if $p\le n^{-1/2-\varepsilon}$ then $\pi_1(Y)$ is a.a.s hyperbolic and non-trivial. 
In addition, they showed that $Y$ is a.a.s simply connected if $p>\Omega(\sqrt{\log n / n})$. 
Several additional properties and parameters of the random group $\pi_1(Y)$, such as its torsion, cohomological dimension and property (T), have also been studied \cite{CF1,CF2,GW_T,HKP_GAR}. 
This line of research is part of the systematic study of random groups that started with Gromov's density model \cite{gromov,Oli}. The group $\pi_1(Y)$ is closely related to \.Zuk's triangular model \cite{Zuk} and to the fundamental group of other random complexes (See \cite{kahle}).

The challenging problem of determining the sharp threshold probability for simple connectivity of $Y=Y_2(n,p)$ remains open. Nevertheless, progress has been made in bounding this probability from above. The first upper bound in \cite{BHK} was based on the fact that if $p>\Omega(\sqrt{\log n / n})$ then a.a.s the following holds for every two vertices $x,y\in Y$. First, there exists a $2$-dimensional face $xyw\in Y$. Moreover, the graph $G_{xy}=([n]\setminus\{x,y\},E_{xy})$, where $uv\in E_{xy}$ if both faces $uvx,uvy \in Y$, is connected. In consequence, every cycle $x,y,z$ of length $3$ is homotopy equivalent to the contractible cycle $x,y,w$, by following the path from $z$ to $w$ in $G_{xy}$. Since $Y$ has a full $1$-dimensional skeleton, if every cycle of length $3$ is contractible then $Y$ is simply connected. This argument was modified in \cite{GW_SC} by only requiring the graphs $G_{xy}$ to have a sufficiently large connected component. This modification reduced the bound to $p\le O(1/\sqrt{n})$. In \cite{KPS}, a bound of $p\le 1/(2\sqrt{n})$ was derived by analyzing a random process that finds a contractible subcomplex of $Y$ with a full $1$-skeleton. 

Our main result further improves the upper bound on the threshold for simple connectivity. The constant $\gamma:=4^4/3^3$ below appears in Tutte's classical results on enumeration of planar triangulations \cite{tutte} (See Theorem \ref{thm:tutte}).
\begin{theorem}
\label{thm:main1}
Let $n$ be an integer and $c>1/\sqrt{\gamma}$ a constant. Then, $Y_2(n,c/\sqrt{n})$ is a.a.s simply connected.  
\end{theorem}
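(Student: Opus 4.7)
The plan is to prove the stronger statement that a.a.s.\ every $3$-cycle $\{x,y,z\} \subset [n]$ is the boundary of a subcomplex of $Y:=Y_2(n,c/\sqrt n)$ that is homeomorphic to a disk (a planar triangulation with boundary $xyz$). Because $Y$ has a complete $1$-skeleton, this immediately implies that every $3$-cycle is null-homotopic, hence $Y$ is simply connected. By a union bound, it suffices to show that for each fixed triple $\{x,y,z\}$ the failure probability is $o(n^{-3})$.

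For such a fixed (oriented) triangle, let $X=X_{xyz}$ count the planar disk triangulations with boundary $xyz$, labeled interior vertices in $[n]\setminus\{x,y,z\}$, and all $2k+1$ interior faces included in $Y$, restricted to triangulations with at most $K=K(n)$ interior vertices. Tutte's theorem (Theorem \ref{thm:tutte}) asserts that the number $t_k$ of rooted planar triangulations of a triangle with $k$ interior vertices satisfies $t_k \sim C\gamma^k k^{-5/2}$. Choosing the interior vertex set gives a factor $\binom{n-3}{k}$, labeling contributes $k!$, and dividing by the root multiplicity gives an unrooted, labeled count of $t_k k!/6$. For $k=o(\sqrt n)$, after multiplying by the survival probability $p^{2k+1}=c^{2k+1}n^{-k-1/2}$, one obtains
$$\mathbb{E}[X] \gtrsim \frac{1}{\sqrt n}\sum_{k=1}^{K}\frac{(c^2\gamma)^k}{k^{5/2}}.$$
Since $c^2\gamma>1$ the partial sums grow geometrically in $K$, so taking $K$ a sufficiently large multiple of $\log n$ makes $\mathbb{E}[X]$ beat any power of $n$.

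To upgrade the first moment to almost-sure existence, I would invoke Janson's inequality (``the Poisson paradigm''):
$$\Pr[X=0] \le \exp\!\left(-\frac{\mathbb{E}[X]^2}{\mathbb{E}[X]+\Delta}\right),$$
where $\Delta$ is the sum, over ordered pairs of distinct triangulations in the family sharing at least one $2$-face, of the joint-inclusion probability. It then suffices to show that the ``correlation term'' $\Delta$ is not much larger than $\mathbb{E}[X]^2$, so that $\mathbb{E}[X]^2/(\mathbb{E}[X]+\Delta) \gg \log n$, giving $\Pr[X=0]=o(n^{-3})$, and a union bound over the $\binom{n}{3}$ boundary triangles concludes the argument.

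The main obstacle will be the estimate on $\Delta$. A pair $(T,T')$ with common face set $F$ contributes $p^{|T|+|T'|-|F|}$, and the number of such pairs must be enumerated carefully. The natural approach is to fix first the shared subcomplex and then count how $T$ and $T'$ extend into each of the complementary regions of the disk; these regions are near-triangulations of polygons, which are again counted by a Tutte-type formula, whose generating function has radius of convergence exactly $1/\gamma$. The delicate point is that for $c^2\gamma$ only slightly larger than $1$, this generating function must be summed against the correct combinatorial weights without an unbounded overcount from large shared subcomplexes. I expect to handle this by parametrizing pairs according to the size and shape of $T\cap T'$, bounding the contribution from each using the same Tutte-type asymptotic, and verifying that $\Delta/\mathbb{E}[X]^2$ decays polynomially in $n$ — at which point Janson's inequality and the union bound finish the proof.
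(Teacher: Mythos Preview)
Your overall architecture --- first moment via Tutte's count, Janson's inequality for an $o(n^{-3})$ failure bound on a fixed triangle, then a union bound --- is exactly the paper's. The gap is in the step you flag as ``the main obstacle'': you expect that $\Delta/\E[X]^2$ decays polynomially in $n$ when summing over \emph{all} triangulations with $k$ interior vertices, and this is false.

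The paper exhibits the obstruction explicitly. Consider pairs $(T_1,T_2)$ of the following form: place an internal vertex $x$, include the faces $12x$ and $13x$, and then triangulate $[23x]$ with $k-1$ further vertices. Any two such triangulations share the two faces $\{12x,13x\}$, and summing over $x$ and over the two inner triangulations gives a contribution to $\Delta$ of order
\[
(n-3)\cdot\left(\frac{(\gamma n)^{k-1}}{(k-1)^{5/2}}\right)^{2}\left(\frac{c}{\sqrt n}\right)^{4k}
\;=\;\Omega\bigl(\E[X]^2\bigr).
\]
Thus $\Delta\ge(1+\Omega(1))\E[X]^2$ and Janson only yields $\Pr[X=0]\le e^{-\Theta(1)}$, a constant, nowhere near $o(n^{-3})$. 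The problem is that these triangulations contain a missing triangle (namely $23x$) with almost all of the $k$ vertices in its interior; your proposed parametrization by ``size and shape of $T\cap T'$'' will not make this term small.

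The paper's fix is to \emph{restrict} the family $\mathcal T$ to $k/2$-simple triangulations: those in which no missing face has more than $k/2$ vertices in its interior. Two nontrivial lemmas are then required. First (Lemma~\ref{lem:manysimple}), this restriction only loses a constant factor in the count, so $\mu$ is still of order $(\gamma c^2)^k/(k^{O(1)}\sqrt n)\ge n^{\Omega(\log n)}$ for $k=\lceil\log^2 n\rceil$. Second (Lemma~\ref{lem:key_lemma}), for the restricted family one can prove $\Delta/\mu^2\le (\log n)^{O(1)}/\sqrt n$; the $\Phi=0$ case in that analysis is precisely where the $k/2$-simplicity hypothesis is used to kill the bad term above (it forces $v_\partial+v_I\ge k/2$, contributing a factor $\lambda^{k/2}$ with $\lambda=1/(\gamma c^2)<1$). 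With these two lemmas in hand, Janson gives $\Pr[X=0]\le e^{-\sqrt n/(\log n)^{O(1)}}=o(n^{-3})$ and your union bound finishes the proof. Your proposal is missing this restriction and the two lemmas that justify it.
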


A triangulation of the $2$-dimensional disk is a simplicial complex that is homeomorphic to the disk. The {\em boundary} of such a triangulation is the image of the disk's boundary under the homeomorphism. Let $Y$ be a simplicial complex. We say that a cycle is {\em triangulated} in $Y$ if it is the boundary of a triangulation $T$ of the disk that is a subcomplex of $Y$. A cycle that is triangulated in $Y$ is also contractible, but not vice versa. Therefore, Theorem \ref{thm:main1} follows directly from the following theorem. 

\begin{theorem}
\label{thm:main}
The sharp threshold probability for the property that every cycle of length $3$ is triangulated in $Y_2(n,p)$ is $p=(\gamma n)^{-1/2}$.
\end{theorem}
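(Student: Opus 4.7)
Write $p=c/\sqrt n$ and $\lambda:=c^2\gamma$, so the claimed threshold corresponds to $\lambda=1$. Everything is driven by Tutte's theorem (Theorem~\ref{thm:tutte}): the number $t_k$ of rooted planar triangulations of a disk with triangular boundary and $k$ interior vertices satisfies $t_k=\Theta(k^{-5/2}\gamma^k)$. By Euler's formula, every such triangulation has exactly $2k+1$ two-faces, so the expected number of triangulations of a fixed triangle $xyz$ with $k$ interior vertices as subcomplexes of $Y=Y_2(n,p)$ equals $t_k\cdot(n-3)(n-4)\cdots(n-k-2)\cdot p^{2k+1}$, which is $\Theta(k^{-5/2}\lambda^k/\sqrt n)$ uniformly for $k=o(\sqrt n)$.

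\textbf{The $0$-statement.} If $c<1/\sqrt\gamma$ then $\lambda<1$, so the total expected number of triangulations of a fixed triangle $xyz$ is
\[
\E[X(xyz)] \;=\; \sum_{k\ge 0}t_k(n-3)\cdots(n-k-2)p^{2k+1} \;=\; O(p)\sum_{k\ge 0}\lambda^k \;=\; O(p)\;=\;o(1).
\]
Markov's inequality gives $\Pr(X(xyz)>0)=o(1)$, so this particular triangle $xyz$ is a.a.s.\ not triangulated, and the property fails a.a.s.

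\textbf{The $1$-statement.} Now let $c>1/\sqrt\gamma$, so $\lambda>1$. The plan is to prove, for every fixed triangle $xyz$, that $\Pr(xyz\text{ is not triangulated})=o(n^{-3})$, and then union-bound over the $\binom{n}{3}$ triangles. To do so, I restrict attention to the triangulations of $xyz$ whose number $k$ of interior vertices lies in some window $[K_0,K_1]$ with $K_1=\Theta(\log n/\log\lambda)$, chosen so that the expected count $\mu:=\sum_{k\in[K_0,K_1]}\E[X_k(xyz)]$ satisfies $\mu=n^{\Omega(1)}$, and in particular $\mu=\omega(\log n)$. Janson's inequality applied to the events ``$T\subset Y$'' as $T$ ranges over this family gives
\[
\Pr(xyz\text{ not triangulated})\;\le\;\exp\!\bigl(-\mu^2/(\mu+\Delta)\bigr),\qquad \Delta\;=\;\sum_{\substack{T\ne T'\\ T\cap T'\ne\emptyset}}p^{|T\cup T'|},
\]
where the sum runs over ordered pairs of triangulations in the window sharing at least one two-face. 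The required bound follows once one establishes $\Delta=o(\mu^2/\log n)$.

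\textbf{Main obstacle.} Bounding $\Delta$ is the heart of the proof, and this is where the full Poisson-paradigm/Tutte machinery must be combined. A pair $(T,T')$ with common subcomplex $S$ decomposes as $S$ together with the triangulated patches each of $T,T'$ inserts into the disks bounded by the cycles of $S$; each patch is itself a disk triangulation whose boundary length can be arbitrary. Handling this requires a quantitative version of Tutte's enumeration for disks of general boundary length, combined with an accounting that trades the saving of a factor $1/n$ per shared interior vertex (one fewer vertex to label) against the saving of a factor $p^{-1}=\sqrt n/c$ per shared two-face (one fewer probability factor). After these cancellations, the dominant contribution to $\Delta$ should come from pairs with only a bounded intersection and give a bound of the form $\Delta=O(\mu^2/\sqrt n)$, comfortably beating $\mu^2/\log n$. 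Organizing this enumeration in a form that telescopes against Tutte's asymptotics is the principal technical step I expect to have to carry out.
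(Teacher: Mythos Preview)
Your $0$-statement is fine and matches the paper. The $1$-statement has a genuine gap: your claimed bound $\Delta=O(\mu^2/\sqrt n)$ is \emph{false} for the unrestricted family of triangulations, and this is precisely the obstacle the paper identifies and works around.

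Here is the obstruction. Among the triangulations of $[123]$ with $k$ internal vertices, a constant fraction have the form ``place an internal vertex $x$, add the faces $12x,13x,23x$ minus one, and triangulate the missing triangle $[23x]$ with the remaining $k-1$ vertices.'' Two such triangulations with the \emph{same} apex $x$ share (at least) the faces $12x$ and $13x$. The contribution of these pairs to $\Delta$ is
\[
\sum_{x}\;\Bigl(t_{k-1}\tbinom{n-4}{k-1}\Bigr)^2 p^{4k}
\;=\;\Theta\!\left(n\cdot\frac{(\gamma n)^{2(k-1)}}{k^{5}}\cdot p^{4k}\right)
\;=\;\Theta\!\left(\frac{(\gamma c^2)^{2k}}{k^{5}\,n}\right)
\;=\;\Theta(\mu^2),
\]
so $\Delta/\mu^2$ is bounded \emph{below} by a positive constant, not by $n^{-1/2}$. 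Janson then yields only $\Pr(\text{not triangulated})\le e^{-\Theta(1)}$, far too weak for a union bound over $\binom{n}{3}$ triangles. Summing over a window of $k$-values does not help, since the terms $\mu_k$ grow geometrically and the bad pairs at the top of the window already give $\Delta\ge\Omega(\mu^2)$.

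The paper's fix is to restrict the family: it considers only \emph{$k/2$-simple} triangulations, i.e.\ those with no missing triangle enclosing more than $k/2$ internal vertices. This kills exactly the nested configurations above (the missing face $[23x]$ has density $k-1>k/2$). Two nontrivial lemmas are then needed: (i) a constant fraction of all triangulations with $k$ internal vertices are $k/2$-simple (so $\mu$ is essentially unchanged), and (ii) for the restricted family, $\Delta/\mu^2\le (\log n)^{O(1)}/\sqrt n$. The second lemma is proved not via Tutte's counts for general boundary lengths as you anticipate, but by an Euler-characteristic bookkeeping on the shared subcomplex $S=T_1\cap T_2$, tracking a parameter $\Phi=3-e_0-e_1/2-\beta_0+\beta_1$ which is always $\le 0$, and which equals $0$ only when $S$ leaves a single triangular hole---precisely the case ruled out by $k/2$-simplicity. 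Your sketch is missing this restriction and the structural analysis that makes it work.
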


This result is obtained by estimations of the first and second moments of the number of triangulated disks in $Y$ with some fixed boundary of length $3$. In a recent paper of the first author and Tessler \cite{LT}, the threshold probability for the appearance of a triangulated $n$-vertex $2$-sphere in $Y=Y_2(n,p)$ was determined by similar moments estimations. However, the difficulties that arise in estimating these moments are quite different in the two problems. Here we consider triangulated disks that are more correlated than the triangulated spheres studied in \cite{LT}. In addition, here we need better probability estimates in order to show that {\em every} cycle of length $3$ is triangulated in $Y$. On the other hand, instead of spheres with $n$ vertices that are studied in \cite{LT}, we can and will restrict to triangulated disks with $k\ll n$ internal vertices. This fact allows a crude analysis of the second moment that evades delicate technical details.


There are classical examples of simplicial complexes that contain contractible cycles which are not boundaries of triangulated disks, e.g., the dunce hat. However, we conjecture that in the $Y_2(n,p)$ model, triangulations of disks determine the threshold for simple connectivity.
\begin{conjecture}
\label{conj:main}
The sharp threshold probability for simple connectivity of $Y_2(n,p)$ is $p=(\gamma n)^{-1/2}$.
\end{conjecture}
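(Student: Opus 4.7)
Since Theorem \ref{thm:main1} already establishes that $Y_2(n,c/\sqrt n)$ is a.a.s.\ simply connected for any constant $c>1/\sqrt\gamma$, the remaining task in proving Conjecture \ref{conj:main} is the matching lower bound: for any constant $c<1/\sqrt\gamma$, the complex $Y = Y_2(n,c/\sqrt n)$ is a.a.s.\ not simply connected. The plan is to prove the stronger assertion that a fixed $3$-cycle, say $\tau = \{1,2,3\}$, is a.a.s.\ not null-homotopic in $Y$; any such $\tau$ directly witnesses the non-triviality of $\pi_1(Y)$.

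A $3$-cycle $\tau$ is null-homotopic in $Y$ if and only if there exists a simplicial map $f\colon D \to Y$ from some triangulation $D$ of the $2$-disk such that $f(\partial D)=\tau$. After collapsing vertex-identifications of $f$, each such filling is described by a \emph{shadow} $S=f(D)$: a $2$-complex obtained from a triangulated disk by identifying some of its vertices (and possibly edges and faces). The approach is thus to bound the expected number of shadows in $Y$ with boundary cycle $\tau$ and to show that this expectation tends to $0$ when $c<1/\sqrt\gamma$; a Markov inequality then gives that $\tau$ admits no filling a.a.s.

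For shadows that are honest triangulated disks, this estimate is already implicit in the paper: Tutte's enumeration, together with the fact that a disk filling with $k$ internal vertices uses exactly $2k+1$ triangles, yields that the expected number of disk fillings of $\tau$ is of order
\[
\frac{c}{\sqrt n}\,\sum_{k\ge 0}\frac{(\gamma c^2)^k}{k^{5/2}} = O(n^{-1/2}),
\]
a convergent series precisely when $c<1/\sqrt\gamma$. The main obstacle is to handle shadows with nontrivial identifications. Each vertex identification reduces the ambient vertex count by one while keeping the number of $2$-faces unchanged, so its probabilistic cost is of order $1/n$, which should more than compensate for the added combinatorial freedom. Turning this heuristic into a rigorous uniform bound requires a structural classification of the combinatorial types of quotients of triangulated disks, together with a generating-function argument in the spirit of Tutte that controls the exponential growth rate of shadows of each type. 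This enumeration of non-disk fillings, and the verification that their cumulative expected contribution remains $o(1)$ up to the critical density, is in my view the central barrier to completing the proof of the conjecture.
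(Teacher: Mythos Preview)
The statement is a \emph{conjecture} in the paper and is not proved there; Section~\ref{sec:open} lists it explicitly as the principal open problem. There is consequently no proof in the paper to compare your attempt against. Your outline correctly identifies that only the lower bound at $c<1/\sqrt\gamma$ is missing, and your first-moment computation for embedded disk fillings reproduces exactly the argument the paper gives for the lower bound of Theorem~\ref{thm:main}.

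Where your proposal remains a plan rather than a proof is precisely where you say it does, but your heuristic for singular fillings understates the obstruction. You write that a vertex identification costs a factor of $n$ ``while keeping the number of $2$-faces unchanged''; however, the genuinely dangerous singular fillings are those in which distinct faces of the triangulated disk map to the \emph{same} triangle of $Y$. Each such face identification multiplies the probability of appearance by $p^{-1}=\Theta(\sqrt n)$, and there is no a priori bound on how many face repetitions a van Kampen diagram for $[123]$ may contain relative to its vertex count. This is exactly the difficulty the paper flags in Section~\ref{sec:open} (``a contraction of a cycle that is not a triangulation can use a face of $Y_2(n,p)$ more than one time''), and the only known way around it---the hyperbolicity argument of Babson, Hoffman and Kahle~\cite{BHK}---stops at $p\le n^{-1/2-\varepsilon}$. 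Your proposal accurately locates the barrier but does not yet supply a mechanism for crossing it; as written it is a restatement of the open problem rather than a proof.
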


The remainder of the paper is organized as follows. In Section \ref{sec:sketch} we overview our proof strategy. In particular, we describe the difficulties in a naive application of the second moment method, explain our approach to overcome them, and derive Theorem \ref{thm:main} from two lemmas that are proved in the subsequent Section \ref{sec:proofs}. We conclude with a number of open questions in Section \ref{sec:open}.

\section{Proof overview}
\label{sec:sketch}
The vertices in the triangulations we consider are always {\em properly labeled}, i.e., no two vertices have the same label. Occasionally, the triangulations we consider are $n$-labeled, i.e., the vertices are labeled by $\{1,...,n\}$, even though the triangulation contains less than $n$ vertices. The triangles or $2$-dimensional faces in a triangulation are referred to as {\em faces}. 

The cycle of length $3$ with vertices $x,y,z$ is denoted by $[xyz]$. For convenience, we refer to a triangulation of the disk whose boundary is the cycle $[xyz]$ as a triangulation of $[xyz]$. In addition, we can and will identify such a triangulation with the set of its $2$-dimensional faces. This is justified since the set of faces completely determines the triangulation. A standard fact in planar graph theory is that a triangulation of a cycle of length $3$ with $k$ internal vertices has $2k+1$ faces. Given two distinct triangulations $T_1\neq T_2$ of the same cycle, we denote $T_1\sim T_2$ if they are not disjoint, i.e., there is a common face to $T_1$ and $T_2$. 

A classical result of Tutte determines the number of planar triangulations of a polygon with a prescribed number of internal vertices. 
Here we only need the enumeration of triangulations of a cycle of length $3$.
\begin{theorem}[\cite{tutte}]
The number $t_k$ of triangulations of a cycle of length $3$ with $k$ labeled internal vertices is equal to
\[
t_k = \frac{6(4k+1)!}{(3k+3)!}.
\]
\label{thm:tutte} 
\end{theorem}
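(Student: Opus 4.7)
The plan is to follow Tutte's classical generating-function approach. I would begin by enlarging the class of objects under consideration: define a \emph{near-triangulation} to be a planar map whose outer face is a simple polygon and whose inner faces are all triangles, and let $\phi_{k,m}$ denote the number of rooted near-triangulations with $k$ internal vertices and outer face of length $m+3$. Here ``rooted'' means a distinguished oriented boundary edge, which breaks all symmetry and, in the case $m=0$, naturally labels the three boundary vertices of a triangulated $3$-gon. Set up the bivariate generating function $\Phi(x,z) = \sum_{k,m\ge 0} \phi_{k,m}\, x^k z^m$.

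Next I would derive a functional equation for $\Phi$ by root-edge decomposition. Given a rooted near-triangulation, consider the unique inner triangle $\Delta$ containing the root edge and examine its third vertex. Either it is an internal vertex, in which case deleting the root edge reveals a rooted near-triangulation with one fewer internal vertex and outer face of length $m+4$; or it is already on the outer boundary, in which case $\Delta$ contains a chord that splits the map into two smaller rooted near-triangulations. Summing over the two cases and over all chord positions yields a quadratic ``catalytic'' equation, schematically
\[
\Phi(x,z) \;=\; 1 \;+\; xz\,\Phi(x,z) \;+\; z\cdot \mathcal{L}_z(\Phi, \Phi),
\]
where $\mathcal{L}_z$ encodes the convolution arising from chord-splitting, and $z$ is the catalytic variable that cannot be eliminated by direct substitution.

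Then I would solve this equation using Tutte's \emph{quadratic method}: introduce an auxiliary parameter $u$ tied to $x$ by an algebraic constraint, posit a rational parametric ansatz $\Phi = \Phi(x,z;u)$, and determine the constraint by imposing that the discriminant of the resulting quadratic (viewed as a polynomial in $\Phi$) vanishes at the unique formal branch. This produces an explicit algebraic expression for $\Phi(x,z)$. Specializing to the coefficient of $z^0$ gives a closed form for $\sum_k \phi_{k,0}\,x^k$, from which Lagrange inversion extracts $\phi_{k,0} = \frac{6(4k+1)!}{k!(3k+3)!}$. Since a rooted triangulation of a $3$-gon is equivalent to a triangulation with the three boundary vertices distinguished, multiplying by $k!$ to label the internal vertices gives $t_k = k!\cdot \phi_{k,0} = \frac{6(4k+1)!}{(3k+3)!}$, with sanity checks $t_0 = t_1 = 1$ and $t_2 = 6$ matching direct enumeration.

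The hard part is solving the catalytic equation: the variable $z$ tracking boundary length is genuinely essential to the recursion, and eliminating it requires the clever discriminant trick of the quadratic method. A modern alternative that bypasses this obstacle would be to use the Poulalhon--Schaeffer bijection between rooted simple triangulations and a family of decorated plane trees; counting those trees via Lagrange inversion on an explicit substitution yields the same closed form in a single step, and might be the cleanest route in a self-contained presentation. Either way, the output is the formula stated in Theorem~\ref{thm:tutte}.
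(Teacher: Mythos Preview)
The paper does not prove Theorem~\ref{thm:tutte}; it is quoted as a classical result of Tutte and cited to \cite{tutte}, then used throughout as a black box (only the formula and its asymptotic $t_k=\Theta(k!\gamma^k/k^{5/2})$ are ever invoked). So there is no ``paper's own proof'' to compare against.

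That said, your outline is a faithful sketch of Tutte's original argument: the catalytic variable tracking boundary length, the root-edge decomposition into the ``third vertex is internal'' and ``third vertex is on the boundary'' cases, and the quadratic method to eliminate the catalytic variable are exactly the ingredients of \cite{tutte}. Your identification of rooted $3$-gon triangulations with boundary-labeled triangulations (both kill the dihedral symmetry of order $6$) and the final multiplication by $k!$ for internal labels are correct, and the sanity checks $t_0=t_1=1$, $t_2=6$ are right. The Poulalhon--Schaeffer bijective alternative you mention would also work and is arguably cleaner, but neither is needed here: for the purposes of this paper you may simply cite the formula.
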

Asymptotically, $t_k=\Theta\left(\frac{k!}{k^{5/2}}\gamma^k\right),$ where $\gamma = 4^4/3^3$.

Given a triangulation of $[123]$ with $k$ labeled internal vertices and integers $4 \leq a_1< \ldots <a_k \leq n$, there is a canonical way to define a proper $n$-labeled triangulation of $[123]$. Namely, one relabels the $i$-th internal vertex in $T$ by $a_i$. Note that this mapping is a bijection, and therefore the number of proper $n$-labeled triangulations of $[123]$ with $k$ internal vertices is $t_k \cdot \binom{n-3}{k}$.

\begin{proof}[Proof of Theorem \ref{thm:main} - lower bound]
Suppose that $Y=Y_2(n,c/\sqrt{n})$ where $c<1/\sqrt{\gamma}$ is a constant. We will show that a.a.s the cycle $[123]$ is not triangulated in $Y$. Let $X$ be the random variable that counts the number of triangulations of $[123]$ in $Y$. We claim that $\E[X]$ is negligible.

According to Theorem \ref{thm:tutte}, there are at most $\gamma^k\cdot k!$ triangulations $T$ of the labeled triangle $[123]$ with $k$ internal vertices, and so the number of proper $n$-labeled triangluations of $[123]$ with $k$ internal vertices is at most $(\gamma n)^k$. In addition, the probability that a triangulation $T$ appears in $Y$ is $p^{2k+1}$, since $|T|=2k+1$.
Therefore,
\[
\E[X] \le \sum_{k=0}^{n-3} (\gamma n)^k \left(\frac{c}{\sqrt{n}}\right)^{2k+1} = 
\frac{c}{\sqrt{n}} \sum_{k=0}^{n-3} (\gamma c^2 )^k = O\left(\frac 1{\sqrt{n}}\right),
\]
where the last equality follows from the assumption that $c < 1/\sqrt{\gamma}.$ 
\end{proof}

We turn to the main result of this paper, which is the proof of the upper bound in Theorem \ref{thm:main}. Suppose that $c>1/\sqrt{\gamma}$ is a constant, $p=c/\sqrt{n}$ and $Y=Y_2(n,p)$. We will prove that a.a.s., every cycle of length $3$ is triangulated in $Y$ by a triangulation that uses $k$ internal vertices, where $k=\log^2 n$. Our approach is to first show that this occurs for a specific cycle $[123]$ with a sufficiently high probability of $1-o(n^3)$, and to then apply a union bound over all cycles of length $3$.

A standard proof strategy for this type of problem is to let $X$ be the random variable that counts the number of  triangulations of $[123]$ with $k$ internal vertices in the random complex $Y=Y_2(n,p)$, and show that $\E[X]\to\infty$ and $\E[X^2]=(1+o_n(1))\E[X]^2$. The conclusion that a.a.s $X>0$ follows immediately from Chebyshev's inequality. 

In the remainder of this section we will describe two obstacles that this approach meets and overview the arguments needed to overcome them. The details of the proof are presented in the subsequent section.

\paragraph{Simple Triangulations.}
The main difficulty in our approach is that the set of triangulations of $[123]$ properly labeled by $\{1,...,n\}$ is too correlated, so the condition $\E[X^2]=(1+o(1))\E[X]^2$ does not hold. Indeed, by a similar computation to the one in the proof of the lower bound in Theorem \ref{thm:main} we see that $$\E[X]^2 = \Theta\left( \frac{(\gamma c^2)^{2k}}{k^{5} n} \right).$$
On the other hand, we have
$$
\E[X^2]=\sum_{T_1,T_2}p^{4k+2-|T_1\cap T_2|}.
$$
We can bound this sum from below by considering only pairs that are either disjoint, or both have the following form. We place an internal vertex $x$, connect it to $1,2$ and $3$, and triangulate $[23x]$ with $k-1$ additional vertices. Thus,
\begin{equation}\label{eqn:EX2}
\E[X^2]\ge \sum_{\substack{T_1\ne T_2 \\ T_1\cap T_2=\emptyset}}p^{4k+2} +
\sum_{x=4}^{n}\sum_{\substack{T_1\ne T_2 \\ T_1\cap T_2\supseteq \{12x,13x\}}}p^{4k}.
\end{equation}
Clearly, the first term contributes $(1+o(1))\E[X]^2$ since almost all pairs of triangulations are disjoint. In addition, the second term contributes
\[
(n-3)\cdot \Omega \left(\frac{(\gamma n)^{k-1}}{(k-1)^{5/2}}\right)^2 \left(\frac{c}{\sqrt{n}}\right)^{4k}=\Omega \left( \E[X]^2\right),
\]
since the number of triangulations of $[23x]$ with $k-1$ internal vertices is $\Omega\left( (\gamma n)^{k-1}/(k-1)^{5/2} \right).$
Consequently, $\E[X^2]\ge (1+\Omega(1))\E[X]^2$.

Therefore, a naive application of the second moment method fails. Our solution is to \textbf{restrict} the set of triangulations.

Let $T$ be a triangulation of $[123]$. A {\em missing face} of $T$ is a triple of vertices $\{x,y,z\}$ of $T$, which is not $\{1,2,3\}$, such that $xyz$ is not a face of $T$ but $xy,xz,yz$ are edges in $T$. Triangulations with no missing faces are called {\em simple} and were studied by Tutte \cite{tutte}. Here we consider a relaxation of this notion.
The {\em density} of a missing face $\{x,y,z\}$ in $T$ is the number of vertices that lie in its interior.
For example, in the triangulations we considered in the second term of (\ref{eqn:EX2}), $\{2,3,x\}$ is a missing face of density $k-1$. 

\begin{definition}
\label{def:simple}
Let $l$ be an integer. A triangulation $T$ is called {$l$-simple} if the density of every missing face of $T$ is at most $l$. In other words, no missing face contains more than $l$ vertices in its interior.
\end{definition}

Simple triangulations are $0$-simple in our terminology, and Tutte showed that the number of simple triangulations with $k$ labeled internal vertices is roughly $k!(3^3/2^2)^k$, which is exponentially smaller than the number of all triangulations $t_k\approx k!\gamma^k$. In comparison, the following lemma shows that a milder restriction to $k/2$-simple triangulations only decreases their number by a constant factor.
\begin{lemma}
\label{lem:manysimple}
Let $k$ be a large integer. The number of $\frac{k}{2}$-simple triangulations of $[123]$ with $k$ internal labeled vertices is $
\Theta\left(\frac {k!}{k^{5/2}}\gamma^k\right).
$
\end{lemma}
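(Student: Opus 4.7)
The upper bound is immediate from Theorem~\ref{thm:tutte}: the set of $\frac{k}{2}$-simple triangulations is contained in the set of all triangulations of $[123]$ with $k$ labeled internal vertices.

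For the lower bound, my plan is to show that a uniformly random triangulation $T$ of $[123]$ with $k$ internal vertices is $\frac{k}{2}$-simple with probability bounded away from $0$. Let $X=X(T)$ count the missing faces of $T$ whose density exceeds $k/2$. The key structural observation is that any two such missing faces must be nested, since their interiors would otherwise together contain more than $k$ internal vertices; in particular, $T$ admits at most one \emph{outermost} missing face of density $>k/2$. Peeling off this outermost face $\{x,y,z\}$ (with interior set $V_{\rm in}$ of size $d$) canonically decomposes $T$ into an inner triangulation $T_{\rm in}$ of $[xyz]$ on $V_{\rm in}$ (contributing $t_d$ choices) and an outer triangulation $T_{\rm out}$ of $[123]$ containing $\{x,y,z\}$ as a face.

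Using this decomposition, I would establish
\[
\mathbb{E}[X] \;\le\; \frac{1}{t_k}\sum_{d > k/2} \binom{k}{3}\binom{k-3}{d}\, t_d\, N_d,
\]
where $N_d$ is the number of triangulations of $[123]$ with $k-d$ internal vertices containing a prescribed internal face. The key estimate is $N_d = O(t_{k-d}/(k-d)^2)$, obtained by averaging: the total number of (triangulation, internal-internal-internal face) pairs on $m$ vertices is at most $(2m+1)\, t_m$, spread over $\binom{m}{3}$ possible triples. Substituting Tutte's asymptotic $t_j \sim C j!\gamma^j/j^{5/2}$ and using the identity $\binom{k}{3}\binom{k-3}{d}\, d!\,(k-d-3)! = k!/6$, the sum reduces to one comparable to $\sum_{j \ge 1} 1/j^{3/2}$, a convergent tail. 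This gives $\mathbb{E}[X] = O(1)$.

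The hard part will be that this $O(1)$ bound is not necessarily less than $1$, so Markov's inequality alone does not immediately yield $\Pr[X = 0] > 0$. To overcome this I would apply a Poisson-paradigm argument: compute the higher factorial moments $\mathbb{E}\bigl[X(X-1)\cdots(X-r+1)\bigr]$ by the same decomposition over ordered $r$-tuples of nested bad missing faces, and verify that the nesting constraints force the interiors to be essentially disjoint, so the moments match $\mathbb{E}[X]^r$ up to $o(1)$ corrections. A Stein-Chen style comparison then shows that the distribution of $X$ is close in total variation to $\mathrm{Poisson}(\lambda)$ with $\lambda = \mathbb{E}[X]$, yielding $\Pr[X = 0] \ge (1-o(1))e^{-\lambda} = \Omega(1)$ and thus the claimed $\Omega(t_k)$ lower bound on the number of $\frac{k}{2}$-simple triangulations.
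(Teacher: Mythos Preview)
Your upper bound and the computation that $\mathbb{E}[X]=O(1)$ are essentially right (modulo the side cases where the missing face uses one or two of the boundary vertices $1,2,3$, which your $\binom{k}{3}$ factor omits; the paper absorbs these uniformly via the identity $t_k^j=\binom{k}{i}t_i t_j(2i+1)$ with $i=k-j$). The genuine gap is the Poisson step.

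Your claim that ``the nesting constraints force the interiors to be essentially disjoint'' is false: the interiors of two bad missing faces are \emph{contained in one another}, not disjoint, so $r$ distinct bad faces do not decompose $T$ into $r$ independent pieces. Concretely, conditioned on $X\ge 1$ with outermost bad face of density $d\approx k$, the remaining $X-1$ bad faces are exactly the bad missing faces of the inner triangulation, which is again an (approximately) uniform triangulation of almost the same size. This self-similarity makes $X$ behave more like a geometric random variable than a Poisson, and the factorial moments do not factor as $\mathbb{E}[X]^r$. In particular, a Stein--Chen comparison to $\mathrm{Poisson}(\lambda)$ cannot be justified this way.

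The paper avoids moments above the first altogether. It observes that every bad $T$ has a unique \emph{innermost} bad missing face of some density $j'>k/2$; if $j'>\lfloor k/2\rfloor+1$ then by minimality the inner triangulation of that face is not $(j'-1)$-dense, i.e.\ $T$ is ``$j'$-dense but not $j'$-nested''. The count of such triangulations is $\hat t_k^{j'}=(1-\alpha_1+o(1))t_k^{j'}$ with $\alpha_1=81/256$, and an explicit numerical calculation then shows $t_k^{\lfloor k/2\rfloor+1}/t_k+\sum_{j'}\hat t_k^{j'}/t_k<1$. The extra factor $1-\alpha_1\approx 0.684$ is essential: the naive bound $\sum_j t_k^j/t_k=\mathbb{E}[X]$ exceeds $1$. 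So your ``outermost bad face'' insight is exactly the right structural observation, but it should be used to refine the first-moment (union) bound, not to pass to higher moments.
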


The main ingredient in the proof of Theorem \ref{thm:main} is Lemma \ref{lem:key_lemma} below, which shows that triangulations with dense missing faces are the only obstacles to apply the second moment method here.

\begin{lemma}
\label{lem:key_lemma}
Let $n,k$ be integers such that $k=\lceil\log^2n\rceil$, $c>1/\sqrt{\gamma}$ a constant, and $\mathcal T$ be the set of $\frac{k}{2}$-simple triangulations of $[123]$ with $k$ internal vertices properly labeled by $\{1,...,n\}$. For every $T\in\mathcal T$, denote by $B_T$ the event that $T\subseteq Y$, where $Y=Y_2(n,c/\sqrt{n})$. In addition, suppose that
\[
\mu := \sum_{T\in \mathcal T} \Pr[B_T].
\]
and 
\[
\Delta :=\sum_{\substack{T_1,T_2\in\mathcal T \\ T_1\sim T_2}} \Pr[B_{T_1}\wedge B_{T_2}].
\]
Then, $$\frac{\Delta}{\mu^2} \leq \frac{(\log n)^{O(1)}}{\sqrt{n}}.$$
\end{lemma}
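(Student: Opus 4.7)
My plan is to apply the second moment method directly. Using Lemma \ref{lem:manysimple} with $\binom{n-3}{k} = \Theta(n^k/k!)$ and the fact that every $T \in \mathcal T$ has exactly $2k+1$ faces, we have $|\mathcal T| = \Theta((\gamma n)^k/k^{5/2})$, hence
\[
\mu = |\mathcal T|\cdot p^{2k+1} = \Theta\!\left(\frac{\lambda^k}{k^{5/2}\sqrt n}\right),
\]
where I set $\lambda := \gamma c^2 > 1$. For $\Delta$, starting from $\Pr[B_{T_1}\wedge B_{T_2}] = p^{4k+2-|T_1\cap T_2|}$ and the elementary inequality $p^{-|T_1\cap T_2|}\le \sum_{\emptyset\ne F\subseteq T_1\cap T_2} p^{-|F|}$ (valid when $|T_1\cap T_2|\ge 1$, since $p<1$ and the term $F=T_1\cap T_2$ already equals the LHS), a swap of summation yields the working bound
\[
\Delta \;\le\; p^{4k+2}\sum_{\emptyset\neq F} N(F)^2\, p^{-|F|}, \qquad N(F) := |\{T\in\mathcal{T}: F\subseteq T\}|.
\]

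\textbf{Classifying common sub-triangulations.} Each non-empty $F$ in the sum is a planar sub-triangulation of the disk $[123]$: it has $m := |F|$ faces and $v_F$ internal vertex-labels, and its complement in the open disk is a union of $h \ge 1$ simply connected ``holes'' bounded by cycles of lengths $l_1,\ldots,l_h$ in the $1$-skeleton of $F\cup\{[123]\}$. A short Euler computation yields the key identity $m + L = 2h + 2v_F + 1$, where $L := \sum_i l_i \ge 3h$. For each such shape, I would use Tutte-type planar enumeration to bound: (a) the number of labeled $F$'s of that shape by $(\gamma n)^{v_F}/v_F^{5/2}$ times a function of $(h,(l_i))$; and (b) $N(F)\le \binom{n-3-v_F}{k-v_F}\sum_{v_1+\cdots+v_h=k-v_F}\binom{k-v_F}{v_1,\ldots,v_h}\prod_i t(l_i,v_i)$, using the asymptotic $t(l,v) = O(C^l\, v!\, \gamma^v / v^{5/2})$ for the number of triangulations of the $l$-cycle with $v$ labeled internal vertices. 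Combining (a) and the square of (b) with $p^{-m} = (\sqrt n / c)^m$ and applying the Euler identity, the $n$-dependence of the ratio of the shape's contribution to the target $\mu^2/\sqrt n$ collapses to $n^{1+h-L/2}$, which using $L\ge 3h$ is at most $n^{1-h/2}$.

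\textbf{Dangerous case and $k/2$-simplicity.} For $h\ge 2$ the exponent is $\le 0$, and the remaining convergent series in $\lambda^{-v_F}$, in the hole allocations $(v_i)$, and in the Tutte-type sum over $(l_i)$ yield a bound of $(\log n)^{O(1)}\mu^2/\sqrt n$. The one genuine danger is $h=1$: then the exponent becomes $n^{2-l_1/2}$, giving $\sqrt n$ when $l_1=3$. Here the $k/2$-simplicity is essential: since $F\ne\emptyset$ forces $v_F\ge 1$, the single hole's boundary is a vertex-triple $\{x,y,z\}\ne\{1,2,3\}$ whose three edges all lie in $T$; if $k-v_F \ge 1$ this triple is not a face of $T$, hence a missing face of $T$ of density $k-v_F$, and $k/2$-simplicity forces $v_F \ge k/2$. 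The contribution is therefore suppressed by $\lambda^{-v_F}\le \lambda^{-k/2}$, which for $k=\log^2 n$ is sub-polynomial in $n$ and easily beats $\sqrt n$. (The boundary $v_F=k$ corresponds to $F=T\setminus\{f\}$ with a unique $T$ closing the hole, and contributes at most $|\mathcal T|(2k+1) p^{-2k}$, which is $\lambda^{-k} \cdot \textup{poly}(k) \cdot \mu^2$, negligible.) For $l_1\ge 4$ the exponent $n^{2-l_1/2}\le n^0$ combined with convergence of $\sum_{v_F}\lambda^{-v_F}/(v_F^{5/2}(k-v_F)^5)$ and a summable tail in $l_1$ (since $n^{-l_1/2}$ dominates the Tutte growth $C^{l_1}$ for large $n$) finishes the case.

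\textbf{Main obstacle.} The primary difficulty is the bookkeeping in step (a): correctly enumerating labeled subcomplexes with given parameters (carefully accounting for how many vertices of $F$ lie in $\{1,2,3\}$ versus in $\{4,\ldots,n\}$), and controlling the Tutte-type constants $C^l$ so that the sum over hole-length multisets $(l_i)$ (with $l_i$ ranging up to $2k+2$) stays polylogarithmic after the cancellations above.
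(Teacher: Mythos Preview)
Your approach is essentially the paper's: the same opening inequality $\Delta/\mu^2 \le |\mathcal T|^{-2}\sum_F p^{-|F|}N(F)^2$, an Euler-type identity relating $|F|$ to vertex counts and the topology of the complement, and the correct identification of the single triangular hole as the unique dangerous configuration, killed by $k/2$-simplicity.

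There is, however, a real gap in your classification. You assert that the complement of $F$ in the open disk is a union of \emph{simply connected} holes, each bounded by a single cycle of some length $l_i$. This is false whenever $F$ has an ``island'' component disjoint from $[123]$. The simplest instance: $F=\{xyz\}$ with $x,y,z\notin\{1,2,3\}$. Here $m=1$, $v_F=3$, and the complement is a single annular region with total boundary length $L=6$; your identity $m+L=2h+2v_F+1$ gives $7\ne 9$. In such cases neither your Euler identity nor your product bound $N(F)\le\prod_i t(l_i,v_i)$ is available, since the hole to be filled is not a disk. Disconnected $F$ (as in the paper's Figure~\ref{fig:example}) are generic in the sum, so this is not a corner case.

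The paper's fix is to keep the full Betti data $(\beta_0,\beta_1)$ of the complex $X_S=F\cup[123]$ and to package them into a half-integer $\Phi=3-e_0-e_1/2-\beta_0+\beta_1$, which satisfies the exact identity $|F|=2(v_\partial+v_I+\Phi)$ in \emph{all} cases and the inequality $\Phi\le 0$, with equality precisely in your dangerous $(h,l_1)=(1,3)$ scenario. A second refinement you will need, and which resolves the ``bookkeeping in step (a)'' you flag, is to split the non-fixed vertices of $F$ into boundary vertices $v_\partial$ (incident to a degree-$1$ edge) and genuinely internal vertices $v_I$: the number of admissible $F$ with given parameters is then bounded by $v_I!\,\gamma^{v_I}k^{O(v_\partial)}$, and the crucial point is that the wild $k^{O(\cdot)}$ factor sits only on $v_\partial$, which is controlled by $\Phi\le 1-v_\partial/6$. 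With these two corrections your outline becomes the paper's proof.
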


\paragraph{The Poisson paradigm.}
By the second moment method, Lemma \ref{lem:key_lemma} implies that the cycle $[123]$ is triangulated in $Y$ with high probability of at least $1-\frac {(\log n)^{O(1)}}{\sqrt{n}}$. However, this bound on the probability is not high enough to imply Theorem \ref{thm:main}. A possible approach to improve the lower bound on this probability is to restrict the set of triangulations even further by forbidding squares or other shapes with many internal vertices. We take a different path and observe that the random variable $Z = \sum_{T\in\mathcal T}1_{B_T}$, that counts the number of $k/2$-simple triangulations of $[123]$ in $Y$, falls under the framework of the Poisson paradigm (See \cite{alon_spencer}). Roughly speaking, $Z$ is the sum of many, rare, almost independent, indicator random variables. Hence, the probability that $Z=0$ decays exponentially in $\mu^2/\Delta$, rather than polynomially.

\begin{theorem}[Janson Inequality]
\[\Pr\left[\bigwedge_{T\in\mathcal T}\overline{B_T}\right]\le
e^{-\mu/2}+e^{-\mu^2/2\Delta}.\]
\end{theorem}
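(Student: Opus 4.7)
This is the standard statement of Janson's inequality (in its ``extended'' form), and the plan is to follow the two-part argument due to Janson, as presented e.g.\ in Alon--Spencer. The key structural facts I will exploit are (i) each $B_T$ is an \emph{increasing} event in the independent Bernoulli variables $\{1_{f\in Y}\}_{f\in\binom{[n]}{3}}$, and (ii) $B_{T_1}$ and $B_{T_2}$ are mutually independent precisely when $T_1$ and $T_2$ share no face, i.e.\ when $T_1\not\sim T_2$. So only ``$\sim$-pairs'' contribute to the dependency structure.

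First I would prove the basic Janson inequality
\[
\Pr\Bigl[\bigwedge_{T\in\mathcal T}\overline{B_T}\Bigr]\leq \exp\Bigl(-\mu+\tfrac{\Delta}{2}\Bigr).
\]
Fix an arbitrary ordering of $\mathcal T$ and telescope the probability as $\prod_T \Pr[\overline{B_T}\mid \bigwedge_{T'<T}\overline{B_{T'}}]$. The heart of the argument is then a Harris/FKG-type correlation bound showing
\[
\Pr[B_T\mid \bigwedge_{T'<T}\overline{B_{T'}}]\geq \Pr[B_T]-\sum_{T'<T,\, T'\sim T}\Pr[B_T\wedge B_{T'}],
\]
which says that conditioning on the decreasing event $\bigwedge_{T'<T}\overline{B_{T'}}$ can only depress the probability of the increasing event $B_T$ through contributions from triangulations $T'$ that share a face with $T$. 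Plugging this into the telescoping product and applying $1-x\leq e^{-x}$ factor by factor yields the basic bound.

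Next, to upgrade this to the form $\exp(-\mu^2/(2\Delta))$, I would use a sampling trick. For a parameter $\beta\in[0,1]$, let $\mathcal T_\beta\subseteq \mathcal T$ be obtained by keeping each $T$ independently with probability $\beta$; then the monotonicity $\Pr[\bigwedge_{T\in\mathcal T}\overline{B_T}]\leq \Pr[\bigwedge_{T\in\mathcal T_\beta}\overline{B_T}]$ holds pointwise in $\mathcal T_\beta$. Applying basic Janson inside $\mathcal T_\beta$, averaging (or picking a deterministic good sub-family via a Markov-type argument on $\mu_{\mathcal T_\beta}$ and $\Delta_{\mathcal T_\beta}$), and optimizing at $\beta\approx\min(1,\mu/\Delta)$ produces the extended bound, which is the useful one in the regime $\Delta\gtrsim\mu$.

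Finally I would dichotomize on $\mu$ versus $\Delta$: if $\mu\geq\Delta$, the basic form gives $\exp(-\mu/2)$; if $\mu<\Delta$, the extended form gives $\exp(-\mu^2/(2\Delta))$. Either way, the right-hand side $e^{-\mu/2}+e^{-\mu^2/(2\Delta)}$ dominates, proving the claim. The main technical obstacle I foresee is the Harris--FKG step in the first part, where one must verify that the correlation inequality continues to yield a lower bound of the correct dependency-graph form even after repeatedly conditioning on a complicated decreasing event; this is where the definition of $\sim$ (sharing a face) plays its essential role, and is the only non-routine piece of the argument.
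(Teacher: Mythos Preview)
The paper does not prove this statement at all: it is quoted as the Janson Inequality, a standard tool from the Poisson paradigm, with an implicit reference to \cite{alon_spencer}, and is used as a black box in the proof of the upper bound of Theorem~\ref{thm:main}. Your proposal is a correct outline of the classical proof (basic Janson via telescoping and the Harris/FKG correlation step, then the extended form via random subsampling and optimizing over $\beta$), but there is no ``paper's own proof'' to compare it to---the authors simply invoke the inequality.
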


\begin{proof}[Proof of Theorem \ref{thm:main} - upper bound.]
By Lemma \ref{lem:key_lemma}, $\mu^2/2\Delta \ge \sqrt{n}/(\log{n})^{O(1)}$. In addition, $$\mu/2 \ge \frac{(\gamma c^2)^k}{k^{O(1)}\sqrt{n}} \ge n^{\Omega(\log n)}.$$ Indeed, the first inequality follows from Lemma \ref{lem:manysimple} which asserts that the number of $k/2$-simple triangulations with $k$ internal vertices is $k!\gamma^k/k^{O(1)}$. For every such triangulation there are $(1-o(1))n^k/k!$ proper $n$-labellings, and the probability for each of these $n$-labeled triangulations to appear in $Y$ is $p^{2k+1}$. The second inequality holds since $\gamma c^2>1$ and $k= \log^2 n$.
As a result, Janson inequality implies that the probability that $[123]$ is not triangulated in $Y$ is $o(1/n^3)$. The proof is concluded by replacing $[123]$ with any fixed cycle of length $3$ and taking a union bound.
\end{proof}

\section{Proof of Lemmas \ref{lem:manysimple} and \ref{lem:key_lemma}}
\label{sec:proofs}
\subsection{Many $k/2$-simple triangulations}
We start with the proof of Lemma \ref{lem:manysimple}. Recall that $t_k$ denotes the number of triangulations of $[123]$ with $k$ labeled internal vertices. A triangulation is called  {\em $j$-dense} if it contains a missing face of density $j$, i.e., a missing face with $j$ internal vertices. 
Note that the interiors of two missing faces are either disjoint, or one is contained in the other.
Hence, if $j>k/2$ then there is at most one missing face of density $j$, that is called the {\em $j$-dense missing face} of the triangulation. Denote by $t_k^j\le t_k$ the number of $j$-dense triangulations. 

Let $i:=k-j$. We observe that 
\begin{equation}t_k^j = \binom ki t_it_j(2i+1),
\label{eqn:dnsjcnst}
\end{equation}
for  $j>k/2$. Indeed, in order to construct a $j$-dense triangulation we (i) triangulate $[123]$ with $i$ internal vertices, (ii) select one of the $(2i+1)$ faces and (iii) triangulate its interior with additional $j$ vertices. In addition, we need to choose the labels of the $i$ vertices out of the $k$ that will be used for the initial triangulation. One can verify that this construction is bijective for $j>k/2$

\begin{claim}
\label{clm:tjk}
Let $i,j$ and $k$ be integers such that $k/2<j<k$ and $i+j=k$. In addition, denote
\[
\alpha_i:=\frac{6(2i+1)}{(3i+3)(3i+2)}\binom {4i+1}{i} \gamma ^{-i}.
\]
Then,
\[
\frac{t_k^j}{t_k} = (1+o_k(1))
\alpha_i\left(\frac kj \right)^{5/2}.
\]
\end{claim}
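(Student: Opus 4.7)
The plan is to derive the ratio $t_k^j/t_k$ directly from equation \eqref{eqn:dnsjcnst}, together with the asymptotic behavior of $t_n$ obtained from Tutte's formula via Stirling's approximation.

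First I would rewrite \eqref{eqn:dnsjcnst} by expanding $\binom{k}{i} = k!/(i!j!)$ and regrouping as
\[
\frac{t_k^j}{t_k} = \frac{(2i+1)\,t_i}{i!} \cdot \frac{t_j/j!}{t_k/k!}.
\]
This rearrangement is the key bookkeeping step: it isolates the ``small-$i$'' factor $(2i+1)t_i/i!$, for which $i$ may be bounded, and leaves only the quantities $t_j/j!$ and $t_k/k!$, whose parameters both tend to infinity under the hypothesis $k/2<j<k$.

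Next I would verify the purely algebraic identity $(2i+1)t_i/i! = \alpha_i \gamma^i$. Substituting $t_i = 6(4i+1)!/(3i+3)!$ from Theorem \ref{thm:tutte} and using $(3i+3)! = (3i+1)!(3i+2)(3i+3)$ together with $\binom{4i+1}{i} = (4i+1)!/(i!(3i+1)!)$, one finds
\[
\frac{(2i+1)t_i}{i!} = \frac{6(2i+1)}{(3i+2)(3i+3)}\binom{4i+1}{i} = \alpha_i \gamma^i,
\]
which reduces the claim to showing that $\frac{t_j/j!}{t_k/k!} = (1+o_k(1))\gamma^{-i}(k/j)^{5/2}$.

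For this, I would derive from Stirling's formula $n! = \sqrt{2\pi n}(n/e)^n(1+O(1/n))$ applied to each factorial in $t_n/n! = 6(4n+1)!/(n!(3n+3)!)$ the uniform asymptotic $t_n/n! = C\gamma^n n^{-5/2}(1+O(1/n))$ for an explicit constant $C$. Indeed, the $n$-dependent exponentials cancel since the coefficient of $n$ in $-(4n+1)+n+(3n+3)$ vanishes; the base factors $4^{4n+1}/3^{3n+3}$ contribute $\gamma^n$ up to a constant; and the combination of $n^{4n+1-n-(3n+3)} = n^{-2}$ with the $\sqrt{2\pi n}$-type Stirling corrections yields the additional $n^{-1/2}$. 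Since $k/2<j$ forces $j\to\infty$ together with $k$, applying this asymptotic at both $j$ and $k$ gives
\[
\frac{t_j/j!}{t_k/k!} = \gamma^{-i}(k/j)^{5/2}(1+O(1/j)).
\]
The only obstacle is uniformity of this error in $i$, but $j>k/2$ yields $j = \Theta(k)$, so $O(1/j) = O(1/k)$ uniformly over all admissible $i$, producing the $(1+o_k(1))$ factor in the statement. Combining this with the algebraic identity above completes the proof.
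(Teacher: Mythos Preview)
Your proof is correct and follows essentially the same approach as the paper's: both isolate the exact factor $(2i+1)t_i/i! = \alpha_i\gamma^i$ and then estimate the remaining quotient via Stirling. The only cosmetic difference is that the paper rewrites that quotient as $(1+o_k(1))(k/j)^2\binom{4j}{j}/\binom{4k}{k}$ and applies Stirling to $\binom{4m}{m}$, whereas you apply Stirling directly to $t_n/n!$; the two routes are equivalent.
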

\begin{proof}
By Tutte's formula and (\ref{eqn:dnsjcnst}),
\begin{align*}
\frac{t_k^j}{t_k}~=~&
\frac {6\cdot k!(4i+1)!(4j+1)!(3k+3)!(2i+1)}{j!i!(3i+3)!(3j+3)!(4k+1)!}\\
~=~&
\alpha_i\gamma^i\cdot
\frac {k!(4j+1)!(3k+3)!}{j!(3j+3)!(4k+1)!}\\~=~&
\alpha_i\gamma^i\cdot (1+o_k(1))\left(\frac kj \right)^{2}\frac {\binom {4j}j} {\binom{4k}k }.
\end{align*}
The claim follows immediately from Stirling's approximation $m!=(1+o_m(1))\STIR{m}~$ which implies that $\binom {4m}m = (1+o_m(1))\gamma^m\cdot\sqrt{\frac{2}{3\pi m}}.$
\end{proof}
Note that ${t_k^j}/{t_k}=(1+o_k(1))\alpha_i$ if $i
\ll k$, and ${t_k^j}/{t_k}\le 6\alpha_i$ always holds if $j>k/2$.
In addition, by Stirling's approximation, 
$\alpha_i \le \frac{16\sqrt{2}}{9\sqrt{3\pi}}\cdot{\frac 1{i\sqrt{i}}}.$

\begin{proof}[Proof of Lemma \ref{lem:manysimple}]
The sum $\sum_{j>k/2} t_k^j$ is an upper bound on the number of triangulations that are not $k/2$-simple, but it may not be tight, because a triangulation can be of more than one density. 

We say that a $j$-dense triangulation $T$ is {\em $j$-nested} if the internal triangulation of the $j$-dense missing face of $T$ is of density $j-1$. Clearly, a $j$-nested triangulation is both $j$-dense and $(j-1)$-dense and hence contributes to both corresponding summands.

Let $\hat t_k^j \le t_k^j$ denote the number of $j$-dense triangulations that are {\em not} $j$-nested.
Consider a $j$-dense triangulation $T$, where $j>k/2$, and let $j'\le j$ be the smallest integer that is greater than $k/2$ for which $T$ is $j'$-dense. 
If $j'>\lfloor k/2 \rfloor + 1$ then, by the minimality of $j'$, $T$ is $j'$-dense but not $j'$-nested. Otherwise, $j'=\lfloor k/2 \rfloor + 1$, hence $T$ is $(\lfloor k/2 \rfloor + 1)$-dense.

In consequence, the fraction of triangulations that are not $k/2$-simple is bounded from above by
\begin{align}\label{eqn:sum} 
& \frac{1}{t_k}\left(
t_k^{\lfloor k/2 \rfloor + 1}+
\sum_{ \lfloor k/2 \rfloor + 1 < j < k} \hat t_k^j
\right) \nonumber \\
\leq & \frac{1}{t_k}\left(
\sum_{\frac k2 < j < k-A} t_k^j+
\sum_{ k-A \le j < k-6} \hat t_k^j+
\sum_{ k-6 \le j < k} \hat t_k^j
\right),
\end{align}
where $A$ is some large constant. We need to show that this fraction is bounded away from $1$.

The number of $j$-nested triangulations is
$
\binom kj t_i (t_j^{j-1}) (2i+1)
$
if $j>k/2$.
Indeed, one may construct a $j$-nested triangulation by constructing a $j$-dense triangulation as in (\ref{eqn:dnsjcnst}), where  the internal triangulation of the $j$-dense missing face is restricted to be $(j-1)$-dense. Therefore, $\hat t_k^j =(1+o_k(1))(1-\alpha_1)t_k^j$ by Claim \ref{clm:tjk}.

We bound the three sums in (\ref{eqn:sum}) separately.
\begin{enumerate}
\item A direct calculation shows that the contribution of the sum over $k-1 \ge j \ge k-6$ is $$(1+o_k(1))(1-\alpha_1)\sum_{i=1}^{6} \alpha_i,$$ which is smaller than $0.53$ for a sufficiently large $k$.
\item The sum over $k-7\ge j \ge k-A$ contributes at most
\[
(1+o_k(1))(1-\alpha_1)\sum_{i=7}^{A}\alpha_i \le 
(1+o_k(1))(1-\alpha_1)\sum_{i=7}^{A}\frac{16\sqrt{2}}{9\sqrt{3\pi}}\cdot{\frac 1{i\sqrt{i}}}.
\]
This is smaller than $(1-\alpha_1)\frac{16\sqrt{2}}{9\sqrt{3\pi}}\cdot \frac{2}{\sqrt{6}}\le 0.46
$
for a sufficiently large $k$.
\item The third sum is at most $\sum_{i=A}^{\infty} 6\alpha_i \le O(1/\sqrt{A})$, which can be made arbitrarily small.
\end{enumerate}
Therefore, the sum in (\ref{eqn:sum}) is bounded away from $1$.
\end{proof}

\subsection{$k/2$-simple triangulations are mostly independent}
We turn to the proof of Lemma \ref{lem:key_lemma} and start with a standard trick.
\begin{claim}
\label{clm:trick}
$$\frac{\Delta}{\mu^2} \le \frac{1}{|\mathcal T|^2} \sum_{\substack{S\subset \binom {[n]}3\\0<|S|<2k+1}} p^{-|S|}|\{T\in \mathcal T~:~T\supset S \}|^2$$
\end{claim}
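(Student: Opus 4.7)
The plan is a direct bookkeeping argument, rearranging the sum defining $\Delta$ by the common face-set of the two triangulations. First I would record the elementary identities $\mu = |\mathcal T|\,p^{2k+1}$ and
\[
\Delta = \sum_{\substack{T_1,T_2\in\mathcal T\\T_1\sim T_2}} p^{|T_1\cup T_2|} = \sum_{\substack{T_1\ne T_2\\T_1\cap T_2\ne\emptyset}} p^{4k+2-|T_1\cap T_2|},
\]
both of which use only that every $T\in\mathcal T$ has exactly $2k+1$ faces and that the faces of $Y$ are independent.

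Next, I would partition the pairs $(T_1,T_2)$ appearing in $\Delta$ according to $S := T_1\cap T_2$. Because $T_1\sim T_2$, the set $S$ is a nonempty subset of $\binom{[n]}{3}$, and because $T_1\ne T_2$ (each having $2k+1$ faces), we have $|S|<2k+1$. Thus
\[
\Delta = \sum_{\substack{S\subset\binom{[n]}{3}\\0<|S|<2k+1}} p^{4k+2-|S|}\,\bigl|\{(T_1,T_2)\in\mathcal T^2 : T_1\cap T_2 = S\}\bigr|.
\]

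Now I would drop the (tight-intersection and distinctness) constraints in the inner count: for each $S$,
\[
\bigl|\{(T_1,T_2)\in\mathcal T^2 : T_1\cap T_2 = S\}\bigr| \le \bigl|\{(T_1,T_2)\in\mathcal T^2 : T_1\supset S,\ T_2\supset S\}\bigr| = |\{T\in\mathcal T : T\supset S\}|^2.
\]
Substituting and then dividing by $\mu^2 = |\mathcal T|^2 p^{4k+2}$ yields the claimed inequality.

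There is really no substantial obstacle here; the point is just to trade the exact equality $T_1\cap T_2=S$ for the containment $S\subseteq T_i$, which squares the count but preserves the factor $p^{4k+2-|S|}$ exactly. The whole value of the step is that the right-hand side is now a sum indexed by a single subset $S$, so the remaining work (done in the rest of Section \ref{sec:proofs}) is to control $|\{T\supset S\}|$ as a function of $S$ using the combinatorial structure of $k/2$-simple triangulations.
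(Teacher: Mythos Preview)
Your proof is correct and is essentially the paper's own argument: the paper also records $\mu=|\mathcal T|\,p^{2k+1}$ and $\Delta=\sum_{T_1\sim T_2} p^{4k+2-|T_1\cap T_2|}$, passes to a sum indexed by nonempty $S\subset\binom{[n]}{3}$ with $|S|<2k+1$, and bounds the inner count by $|\{T\in\mathcal T:T\supset S\}|^2$. The only cosmetic difference is that the paper bounds $p^{-|T_1\cap T_2|}\le\sum_{\emptyset\ne S\subseteq T_1\cap T_2}p^{-|S|}$ and then swaps the order of summation, whereas you partition by the exact intersection $S=T_1\cap T_2$ and relax to containment---both routes arrive at the same right-hand side.
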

\begin{proof}
First, we recall that $\mu = |\mathcal T| p^{2k+1}$ and 
$\Delta = \sum_{T_1\sim T_2} p^{4k+2-|T_1\cap T_2|}.$ Therefore,
\begin{align*}
\frac{\Delta}{\mu^2} ~=~& \frac{1}{|\mathcal T|^2}\sum_{T_1\sim T_2} p^{-|T_1\cap T_2|} \\
~\le~& \frac{1}{|\mathcal T|^2}\sum_{T_1\sim T_2}\sum_{\substack{S\subseteq T_1\cap T_2\\ \emptyset\neq S} } p^{-|S|} \\
~\le~& \frac{1}{|\mathcal T|^2}\sum_{\substack{S\\0<|S|<2k+1}}p^{-|S|}|\{T\in \mathcal T~:~T\supset S \}|^2.
\end{align*}
The first inequality follows from the fact $T_1\cap T_2 \neq\emptyset$ if $T_1\sim T_2$ so the summation only includes additional elements. The second inequality is obtained by changing the order of summation. The set $S$ cannot be of size $2k+1$ because $T_1\neq T_2$. In addition, the number of pairs $T_1\sim T_2$ for which $T_1\cap T_2 \supseteq S$ is bounded by the square of the number of triangulations in $\mathcal T$ that contain $S$.
\end{proof}
We introduce some notations and terminology for the remainder of the proof (See Figure \ref{fig:example} for an example).
A nonempty set of $2$-dimensional faces, or triples, over $n$ vertices is called {\em admissible} if it is strictly contained in (the set of faces of) some triangulation $T\in\mathcal T$. Let $S$ be an admissible set of faces. We consider a $2$-dimensional simplical complex $X_S$ that is comprised of $S$, edges and vertices that are contained in some face of $S$, and the cycle $[123]$.

The {\em degree} of an edge of $X_S$ equals the number of faces of $S$ that contain it. We denote by $e_l$ the number of edges of degree $l$, where $l=0,1,2$. Note that no edge of $S$ is of degree greater than $2$ since $X_S$ is planar. In addition, $e_0\le 3$ since only the edges of the cycle $[123]$ can have degree $0$. We denote by $V(S)$ the vertex set of $X_S$ and partition it into three subsets: (a) The vertices $1,2$ and $3$ are called the {\em fixed} vertices, (b) the {\em boundary} vertices $V_\partial(S)$ is the set of all other vertices that are contained in an edge of degree $1$, and (c) the remaining vertices $V_I(S)$ are called {\em internal} vertices. We denote by $v_\partial$ and $v_I$ the number of boundary and internal vertices respectively. In particular $|V(S)|=v_\partial +v_I+3$. Any triangulation $T\in\mathcal T$ that contains $S$ has additional $v_O:=k-v_\partial-v_I$ vertices that are called the {\em outer} vertices. 

In addition, we denote by $\beta_0$ the number of connected components of $X_S$ and by $\beta_1$ its first Betti number. The parameter $\beta_1$ has a concrete description that we will use. Namely, consider any realization of the complex $X_S$ in the plane such that the cycle $[123]$ bounds the outer face. $\beta_1$ is the number of connected components in the complement of $X_S$, except for the outer face. In fact, the first homology is generated by the cycles in $X_S$ that bound every such component from the outside. 

We define an additional parameter $\Phi$ that will play a crucial role in the computation by \begin{equation}
\label{eqn:phi}
\Phi:=3-e_0-\frac{e_1}2-\beta_0+\beta_1.
\end{equation}

\begin{figure}
\begin{center}
\begin{tikzpicture}
\draw[fill=white,dashed] (0,1) to (-2,-2.5) to (0.3,0) to (0,-2.2) to (-1.1,-2.1) --(3,-3)--(0,1);

\draw[fill=white] (-3,-3) to (-2,-2.5) to (0,-2.2) to(-3,-3);

\draw[fill=gray!40] (.5,-1.9) to (1.2,-1.5) to (.5,-1) to (.5,-1.9);
\draw[fill=gray!40] (0,1) to (-3,-3) to (-2,-2.5) to (0,1);
\draw[fill=gray!40]  (-3,-3) to (-0,-2.2) to (1.5,-2.3)to (3,-3)to(-3,-3);
\draw[fill=gray!40] (0,-2.2) to (0.3,0) to (-2,-2.5) to (0,-2.2);
\draw[fill=white] (-0.5,-1.8) to (-.1,-.8) to (-1.1,-2.1) to (-0.5,-1.8);

\node[circle,draw,fill=gray!20,inner sep=0.2pt] (1) at (0,1){\footnotesize $1$};
\node[circle,draw,fill=gray!20,inner sep=0.2pt] (2) at (-3,-3){\footnotesize $2$};
\node[circle,draw,fill=gray!20,inner sep=0.2pt] (3) at (3,-3){\footnotesize $3$};
\node[circle,draw,fill=gray!20,inner sep=0.2pt] (4) at (-0,-2.2){\footnotesize $4$};
\node[circle,draw,fill=gray!20,inner sep=0.2pt] (5) at (0.3,0){\footnotesize $5$};
\node[circle,draw,fill=gray!20,inner sep=0.2pt] (6) at (-2,-2.5){\footnotesize $6$};
\node[circle,draw,fill=gray!20,inner sep=0pt] (7) at (1.5,-2.3){\scriptsize $A$};
\node[circle,draw,fill=gray!20,inner sep=0.2pt] (8) at (-0.5,-1.8){\footnotesize $8$};
\node[circle,draw,fill=gray!20,inner sep=0.2pt] (9) at (-.1,-.8){\footnotesize $9$};
\node[circle,draw,fill=gray!20,inner sep=0.2pt] (10) at (-1.1,-2.1){\footnotesize $7$};
\node[circle,draw,fill=gray!20,inner sep=0pt] (11) at (.5,-1.9){\scriptsize $B$};
\node[circle,draw,fill=gray!20,inner sep=0pt] (12) at (1.2,-1.5){\scriptsize $C$};
\node[circle,draw,fill=gray!20,inner sep=0pt] (13) at (.5,-1){\scriptsize $D$};
\node[circle,draw,fill=gray!20,inner sep=0pt] (14) at (0.75,-2.7){\scriptsize $E$};

\draw (3)--(1)--(2)--(4)--(6)--(2)--(3)--(7)--(4)--(5)--(6)--(1);
\draw (8)--(9)--(10)--(8);
\draw (2)--(14)--(4);
\draw (3)--(14)--(7);
\draw (5)--(9);
\draw (4)--(8);
\draw (9)--(6)--(10);
\draw (9)--(4)--(10);
\end{tikzpicture}
\end{center}
\caption{
A planar embedding of $X_S$ induced by the admissible set $$S=\{ 126,23E,24E,3AE,459,467,478,489,4AE,569,679,BCD \}.$$ Here we use hexadecimal labels for convenience. $S$ has $v_\partial = 10$ boundary vertices, $v_I=1$, $e_0=1$, $e_1=16$ and $e_2=10$. Moreover, $S$ has $2$ connected components and $\beta_1(X_S)=3$ since the first homology is generated by the cycles $[246],[789]$ and $[1654A3]$. Therefore, $\Phi = -5$ and $2(v_\partial+v_I+\Phi)=12=|S|$ as predicted by Claim \ref{clm:phieuler} item 2.}
\label{fig:example}
\end{figure}

Consider the {\em tuple of parameters} $P=P(S):=(v_\partial,v_I,v_O,\Phi)$ of an admissible set $S$. The basic idea of the proof is to join together all the sets $S$ with the same tuple of parameters $P(S)$. 
Let $\mathcal P:=\{P(S)~:~\mbox{$S$ is admissible}\}$ be the set of all {\em admissible tuples of parameters}. 
\begin{claim}
\label{clm:phieuler}
Let $P=(v_\partial,v_I,v_O,\Phi)\in\mathcal P$ be an admissible tuple of parameters.
\begin{enumerate}
\item One of the following holds. Either (i) $\Phi \le -1/2$ or (ii) $\Phi = 0$ and $v_\partial + v_I\ge k/2$.
\item For every admissible set $S$ such that $P(S)=P$ there holds 
$|S| = 2(v_\partial +v_I + \Phi).$
\item $\Phi \le 1 - \frac{v_\partial}6$.
\end{enumerate}
\end{claim}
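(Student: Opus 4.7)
For item 2, the strategy is to compute $\chi(X_S)$ in two ways. On one hand, $V = v_\partial + v_I + 3$, and the face-edge incidence $e_1 + 2 e_2 = 3|S|$ gives $E = e_0 + \tfrac{e_1}{2} + \tfrac{3|S|}{2}$, hence $\chi(X_S) = V - E + |S| = v_\partial + v_I + 3 - e_0 - \tfrac{e_1}{2} - \tfrac{|S|}{2}$. On the other hand, since $\beta_1$ as defined equals the first Betti number of the planar 2-complex $X_S$, one has $\chi(X_S) = \beta_0 - \beta_1$. Equating the two expressions yields $|S| = 2(v_\partial + v_I + \Phi)$.

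For item 3, I plan to combine two inequalities. First, each boundary vertex $v$ has at least two incident degree-1 edges: its link in $X_S$ is a disjoint union of paths and cycles, and $v$ being boundary forces at least one path, which contributes two endpoints, i.e., two degree-1 edges at $v$. Summing gives $2e_1 \ge 2v_\partial$. Second, since $X_S \subseteq T$ and each cycle edge is a boundary edge of the disk triangulation $T$, every cycle edge has degree at most $1$ in $X_S$; writing $e_1^{\mathrm{cyc}}$ for the number of cycle edges of degree $1$, this gives $e_0 + e_1^{\mathrm{cyc}} = 3$. Each hole is bounded by a closed walk of length $\ge 3$ whose edges are exactly the degree-0 cycle edges and the degree-1 non-cycle edges (the only edges adjacent to a complementary region inside $[123]$), so $3\beta_1 \le e_0 + (e_1 - e_1^{\mathrm{cyc}}) = 2e_0 + e_1 - 3$. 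Plugging this into the definition of $\Phi$,
\[
\Phi = 3 - e_0 - \tfrac{e_1}{2} - \beta_0 + \beta_1 \;\le\; 2 - \tfrac{e_0}{3} - \tfrac{e_1}{6} - \beta_0 \;\le\; 1 - \tfrac{v_\partial}{6}.
\]

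For item 1, $\Phi$ is a half-integer (since $e_1/2$ is the only possibly non-integer term), and by item 3 the only positive candidates are $\Phi \in \{1/2, 1\}$. To rule out $\Phi = 1$: item 3 forces $v_\partial = 0$ and $e_0/3 + e_1/6 + \beta_0 \le 1$, so $e_0 = e_1 = 0$ and $\beta_0 \le 1$, but then $\Phi = 3 - \beta_0 + \beta_1 \ge 2$, contradicting $\Phi = 1$. For $\Phi = 1/2$ a parallel analysis pins down $\beta_0 = 1$ and $\beta_1 = 0$, but $\beta_1 = 0$ means $X_S$ has no holes, so its faces tile the disk $[123]$ and $S = T$, contradicting $S \subsetneq T$. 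For $\Phi = 0$: admissibility ($\beta_1 = 0$ would again force $S = T$) together with item 3 pins down $\beta_0 = \beta_1 = 1$ and $2e_0 + e_1 = 6$, so the unique hole has triangular boundary $\{x,y,z\}$. The case $\{x,y,z\} = \{1,2,3\}$ would make the hole the entire interior of $[123]$, forcing $S = \emptyset$, which is impossible. Otherwise $\{x,y,z\} \ne \{1,2,3\}$ and $xy, xz, yz \in T$, so $\{x,y,z\}$ is either a face of $T$ (hole density $0$) or a missing face, whose density is at most $k/2$ by $k/2$-simplicity. In either case, the hole contains $d \le k/2$ internal vertices of $T$, giving $v_\partial + v_I = k - d \ge k/2$.

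The main obstacle, I expect, is item 1 --- specifically, carefully ruling out the positive values of $\Phi$ and verifying that the $\Phi = 0$ case reduces to a single-missing-face (or single-face) configuration in which the $k/2$-simplicity of $T$ can be invoked. Items 2 and 3 are more mechanical, relying on Euler's formula and on counting edge-hole incidences respectively.
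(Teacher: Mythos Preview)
Your argument is correct and uses the same ingredients as the paper: item~2 via the Euler characteristic together with the incidence relation $3|S|=e_1+2e_2$, and item~3 via the link argument giving $e_1\ge v_\partial$ combined with the hole--boundary count $3\beta_1\le 2e_0+e_1-3$. The only difference is organizational, in item~1: instead of case-analyzing $\Phi\in\{1,\tfrac12,0\}$, the paper rearranges the same inequality $3\beta_1\le 2e_0+e_1-3$ (with $\beta_0\ge 1$) directly into $\Phi\le (1-\beta_1)/2$, and then the fact $\beta_1\ge 1$---which you also invoke, phrased as ``$\beta_1=0$ forces $S=T$''---yields $\Phi\le 0$ in one line; the equality case $\Phi=0$ then immediately gives $\beta_1=1$ and a single triangular hole, matching your analysis of that case.
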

\begin{proof}
$1.$ We start by showing that $\Phi \le 0$. Let $S$ be any admissible set whose tuple of parameters is $P(S)=P$, and consider any planar embedding of $X_S$ in which $[123]$ bounds the outer face. 
First note that $\beta_0,\beta_1\ge 1$. Indeed, the image of $X_S$ is a nonempty planar domain that contains the boundary of a triangle $[123]$ but not its entire interior.

Recall that the first Betti number $\beta_1$ is equal to the number of connected components in the complement of $X_S$ minus the outer face. Every edge of degree $0$ or $1$ in $S$ is incident to such a component, except the $(3-e_0)$ edges of degree $1$ between the fixed vertices that are incident to a component of $X_S$ and to the outer face. On the other hand, every component in the complement is bounded by at least $3$ edges of $S$. Therefore,
\begin{equation}\label{eqn:white}3\cdot \beta_1\le e_0+e_1-(3-e_0)=2e_0+e_1-3.\end{equation} In consequence,
\[
\Phi = 3 - e_0-\frac{e_1}{2}-\beta_0+\beta_1
\le
3-\frac 32(\beta_1+1)-1+\beta_1=\frac{1-\beta_1}{2}\le 0.
\]

Therefore, $\Phi = 0$ if and only if $\beta_1=1$ {\em and} the boundary of the unique component of $X_S$'s complement contains exactly $3$ edges. In such a case, in any triangulation $T\in\mathcal T$ that contains $S$ this component induces a missing face with $v_O$ internal vertices. Therefore, $v_O \le k/2$ since $T$ is $k/2$-simple. 

Otherwise, $\Phi <0$ and $2\Phi$ is an integer, hence $\Phi\le -1/2$.

$2.$ We compute the Euler characteristic of the complex $X_S$ in two standard ways:
\[
v_\partial+v_I+3-e_0-e_1-e_2+|S| = \beta_0-\beta_1.
\]
In addition, $3|S|=e_1+2\cdot e_2$ by counting incidences between faces and edges of $S$. The proof is concluded by canceling out $e_2$.

$3.$ We observe that $v_\partial\le e_1$ since every boundary vertex is contained in at least two edges of degree $1$. In addition, as $\beta_0>0$ and $e_0\ge 0$, by inequality (\ref{eqn:white}) there holds $\Phi \le 1-e_1/6$ and the claim follows.
\end{proof}

Let $$\mathcal S(P):=\{ S~:~P(S)=P, V_\partial(S)=\{4,...,v_\partial+3\} \mbox{ and }V_I(S)=\{v_\partial+4,\ldots,v_\partial+v_I+3\} \}$$ where $P$ is an admissible tuple of parameters. Note that up to a factor of $\binom {n-3} {v_\partial,v_I,n-3-v_\partial-v_I}$ choices for the labels of the vertices, $\mathcal S(P)$ contains all the sets $S$ with $P(S)=P$. 

In addition, suppose that $S$ is admissible with vertex sets $ V_\partial(S)=\{4,...,v_\partial+3\}$ and $V_I(S)=\{v_\partial+4,\ldots,v_\partial+v_I+3\}$. Let $\mathcal T_k(S)$ denote the set of $k/2$-simple triangulations $T$ with vertex set $V(T)=\{1,...,k+3\}$ that contain $S$. We observe that the number of all triangulations $T\in\mathcal T$ that contain $S$ is equal to $\binom {n-|V(S)|}{v_O}\cdot |\mathcal T_k(S)|$.

\begin{lemma}
\label{lem:theBigBounds}
Let $P=(v_\partial,v_I,v_O,\Phi)\in\mathcal P$ be an admissible tuple of parameters and $S\in\mathcal S(P)$. Then,
\begin{enumerate}
\item
\[
|\mathcal T_k(S)| \le v_O!\gamma^{v_O} k^{O(v_\partial)},
\]
\item
\[
|\mathcal S(P)| \le v_I!\gamma^{v_I}k^{O(v_\partial)}.
\]
\end{enumerate}
\end{lemma}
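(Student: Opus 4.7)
The plan is to reduce both inequalities to enumerations of planar triangulations, for which Tutte-type formulas yield the $j!\,\gamma^j$ growth, and to absorb all boundary effects into the polynomial factor $k^{O(v_\partial)}$. The key preliminary observation is that every edge of degree $1$ in $X_S$ has both endpoints in $V_\partial(S)\cup\{1,2,3\}$: an internal vertex of $X_S$ touches only degree-$2$ edges, so every degree-$1$ edge lies in the subgraph spanned by the fixed and boundary vertices. Since this subgraph is planar on $v_\partial+3$ vertices, we obtain $e_1 = O(v_\partial)$, and consequently the total boundary length $M := 2e_0+e_1$ of the bounded regions, their number $\beta_1 \le e_1/3$, and $|\Phi|$ are all $O(v_\partial)$.

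For item~1, I would realize each completion $T\in\mathcal T_k(S)$ as a triangulation of the $\beta_1$ bounded regions of the planar complement of $X_S$, using the $v_O$ labeled outer vertices. Distributing these vertices among the regions contributes $\binom{v_O}{j_1,\ldots,j_{\beta_1}}$ ways, and for a region bounded by a cycle of length $m$ containing $j$ assigned internal vertices I would use the extension of Tutte's formula to triangulations of polygons to bound the number of triangulations by $j!\,\gamma^j\cdot m^{O(1)}$. Multiplying across regions, the $\prod_i j_i!$'s combine with the multinomial to give $v_O!$, the exponential factors combine to $\gamma^{v_O}$, and the residual $\prod_i m_i^{O(1)} \le M^{O(\beta_1)} = k^{O(v_\partial)}$.

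For item~2, I would bound $|\mathcal S(P)|$ by comparing it to the set of planar triangulations of $[123]$ on the full vertex set $\{1,2,3\}\cup V_\partial\cup V_I$, whose cardinality is $t_{v_\partial+v_I} = O\bigl((v_\partial+v_I)!\,\gamma^{v_\partial+v_I}\bigr)$ by Tutte. Every $S\in\mathcal S(P)$ admits a canonical completion to such a triangulation by filling each hole with a fan triangulation using only existing vertices; the completion is determined by $S$ up to $\exp(O(v_\partial))$ Catalan-like choices (one per hole, controlled by $\sum_i m_i = O(v_\partial)$). Passing from $(v_\partial+v_I)!\,\gamma^{v_\partial+v_I}$ to $v_I!\,\gamma^{v_I}$ introduces a factor of at most $\binom{v_\partial+v_I+3}{v_\partial+3}\,\gamma^{v_\partial+3} \le k^{O(v_\partial)}$, yielding the claimed bound.

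The main obstacle I anticipate is handling the case $\beta_0>1$, in which some bounded regions have multi-cyclic boundaries because components of $X_S$ sit as islands inside them. My plan is to bridge each island to the outer boundary of its hole via an auxiliary path, reducing to the single-boundary polygon case at the cost of an $\exp(O(v_\partial))$ multiplicative factor; this is absorbed into $k^{O(v_\partial)}$ because the total number and size of islands are bounded by $O(v_\partial)$.
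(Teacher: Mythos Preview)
Your approach is sound in spirit but differs from the paper's in both parts, and there is one genuine gap in item~2.

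\textbf{Comparison.} For item~1 the paper avoids your hole-by-hole count entirely: it fixes once and for all a triangulation $S'$ of the region covered by $S$ using only $V_\partial(S)\cup\{1,2,3\}$, and observes that $T\mapsto (T\setminus S)\cup S'$ is an injection from $\mathcal T_k(S)$ into the set of all triangulations of $[123]$ with $v_\partial+v_O$ internal vertices, giving $|\mathcal T_k(S)|\le t_{v_\partial+v_O}\le v_O!\,\gamma^{v_O}k^{O(v_\partial)}$ in one line. This sidesteps both the polygon version of Tutte's formula and the island-bridging issue you flag. For item~2 the paper goes in the opposite direction from you: rather than completing $S$ to a full triangulation, it \emph{constructs} $S$ by first choosing the Eulerian graph $G$ of degree-$1$ edges on $V_\partial\cup\{1,2,3\}$, then a planar embedding of $G\cup[123]$, then a green/white $2$-colouring of the faces, and finally a triangulation of the green area with $v_I$ internal vertices. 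Each step is bounded by $k^{O(v_\partial)}$ except the last, which is bounded by $v_I!\,\gamma^{v_I}k^{O(v_\partial)}$ via the same injection trick as item~1. Your approach is more symmetric (both items reduce to full triangulations of $[123]$), while the paper's constructive approach for item~2 makes the role of the boundary graph explicit and cleanly separates the $v_\partial$-dependent choices from the $v_I$-dependent ones.

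\textbf{Gaps.} In item~1, your bound $j!\,\gamma^j\cdot m^{O(1)}$ for triangulations of an $m$-gon with $j$ internal vertices is off: already for $j=0$ there are $C_{m-2}\sim 4^m$ triangulations, so the correct dependence is exponential in $m$, not polynomial. This is harmless for the final bound since $\sum_i m_i = O(v_\partial)$ gives $\prod_i C^{m_i}=C^{O(v_\partial)}\le k^{O(v_\partial)}$, but the intermediate statement should be corrected. More seriously, in item~2 you define a map $S\mapsto T'$ but never argue that it has bounded fibres. The map is \emph{not} injective: from $T'$ you can recover all faces of $S$ incident to $V_I$, but faces of $S$ supported entirely on $V_\partial\cup\{1,2,3\}$ are indistinguishable from your fan faces. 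The fix is easy---the induced subgraph of $T'$ on $V_\partial\cup\{1,2,3\}$ is planar, hence has $O(v_\partial)$ edges and $O(v_\partial)$ faces, so the number of ways to decide which of these faces belong to $S$ is $2^{O(v_\partial)}$---but it needs to be stated. You should also check that the completion $T'$ is a genuine simplicial complex (no repeated labelled triangles) when islands are present; your bridging idea handles this, but the details are not automatic.
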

\begin{proof}
$1.$ We claim that $|\mathcal T_k(S)|\le t_{v_\partial+v_O}$. Indeed, consider a triangulation $T\in\mathcal T_k(S)$ and suppose that we remove the $v_I$ internal vertices of $S$ and replace the faces of $S$ with a triangulation $S'$ of the area it covers that uses only the $v_\partial$ boundary vertices and $1,2,3$. This procedure yields a triangulation $ (T\setminus S)\cup S'$ of $[123]$ with $v_\partial+v_O$ internal vertices. In addition, note that this procedure does not depend on the particular planar embedding of $S$ that $T$ induces since $S'$ can be chosen independently of the embedding. Therefore, for a fixed $S'$, this procedure is a well defined injection from $\mathcal T_k(S)$ to the triangulations of $[123]$ with $v_\partial+v_O$ internal vertices. Consequently,
\[
|\mathcal T_k(S)|\le (v_\partial+v_O)!\gamma^{v_\partial+v_O} \le v_O!(v_\partial+v_O)^{v_\partial}\cdot\gamma^{v_\partial+v_O} \le
v_O!\gamma^{v_O} k^{O(v_\partial)}.
\]

$2.$ We describe a procedure to construct an arbitrary set $S\in\mathcal S(P)$, i.e., $S$ is a contained in some triangulation of $[123]$, and bound the number of possible such procedures. We first choose an Eulerian graph $G$ with $v_\partial+3$ vertices such that $G\cup [123]$ is planar. Note that the faces of an Eulerian graph that is embedded in the plane have a proper $2$-coloring. We embed $G\cup [123]$ in the plane such that $[123]$ bounds the outer face, and color the internal faces in green and white such that every two faces that share an edge of $G$ have different colors. In addition, we triangulate the green area with $v_I$ internal vertices. The constructed $S$ is the set of triangles that participate in the triangulation of the green area. 
Every set $S\in\mathcal S(P)$ can be constructed by this method by letting $G$ be the graph whose vertex set is $V_{\partial}(S)\cup\{1,2,3\}$ and whose edge set is the edges of degree $1$ in $S$ (See Figure \ref{fig:procedure}).

\begin{figure}[ht]
\begin{subfigure}[b]{0.45\textwidth}
\begin{center}
\begin{tikzpicture}
\node[circle,draw,fill=gray!50,inner sep=0.2pt] (1) at (1.5,-1.5){\footnotesize $1$};
\node[circle,draw,fill=gray!50,inner sep=0.2pt] (2) at (3,-3){\footnotesize $2$};
\node[circle,draw,fill=gray!50,inner sep=0.2pt] (3) at (5,-3){\footnotesize $3$};
\node[circle,draw,fill=gray!50,inner sep=0.2pt] (4) at (4.5,-1.5){\footnotesize $4$};
\node[circle,draw,fill=gray!50,inner sep=0.2pt] (5) at (4.5,0){\footnotesize $5$};
\node[circle,draw,fill=gray!50,inner sep=0.2pt] (6) at (3,0){\footnotesize $6$};
\node[circle,draw,fill=gray!50,inner sep=0.2pt] (7) at (5.5,-2){\tiny $A$};
\node[circle,draw,fill=gray!50,inner sep=0.2pt] (8) at (5,-0.5){\footnotesize $8$};
\node[circle,draw,fill=gray!50,inner sep=0.2pt] (9) at (6.8,-1){\footnotesize $9$};
\node[circle,draw,fill=gray!50,inner sep=0.2pt] (10) at (6,-.05){\footnotesize $7$};
\node[circle,draw,fill=gray!50,inner sep=0.2pt] (11) at (6,-3){\scriptsize $B$};
\node[circle,draw,fill=gray!50,inner sep=0.2pt] (12) at (7,-3){\scriptsize $C$};
\node[circle,draw,fill=gray!50,inner sep=0.2pt] (13) at (6.50,-2){\scriptsize $D$};
\draw (1)--(2)--(4)--(6)--(2)--(3)--(7)--(4)--(5)--(6)--(1);
\draw (8)--(9)--(10)--(8);
\draw (11)--(12)--(13)--(11);
\end{tikzpicture}
\end{center}
\caption{}
\end{subfigure}
\begin{subfigure}[b]{0.45\textwidth}
\begin{center}
\begin{tikzpicture}
\node[circle,draw,fill=gray!50,inner sep=0.2pt] (1) at (0,1){\footnotesize $1$};
\node[circle,draw,fill=gray!50,inner sep=0.2pt] (2) at (-3,-3){\footnotesize $2$};
\node[circle,draw,fill=gray!50,inner sep=0.2pt] (3) at (3,-3){\footnotesize $3$};
\node[circle,draw,fill=gray!50,inner sep=0.2pt] (4) at (-0,-2.2){\footnotesize $4$};
\node[circle,draw,fill=gray!50,inner sep=0.2pt] (5) at (0.3,0){\footnotesize $5$};
\node[circle,draw,fill=gray!50,inner sep=0.2pt] (6) at (-2,-2.5){\footnotesize $6$};
\node[circle,draw,fill=gray!50,inner sep=0.2pt] (7) at (1.5,-2.3){\tiny $A$};
\node[circle,draw,fill=gray!50,inner sep=0.2pt] (8) at (-0.5,-1.8){\footnotesize $8$};
\node[circle,draw,fill=gray!50,inner sep=0.2pt] (9) at (-.1,-.8){\footnotesize $9$};
\node[circle,draw,fill=gray!50,inner sep=0.2pt] (10) at (-1.1,-2.1){\footnotesize $7$};
\node[circle,draw,fill=gray!50,inner sep=0.2pt] (11) at (.5,-1.9){\scriptsize $B$};
\node[circle,draw,fill=gray!50,inner sep=0.2pt] (12) at (1.2,-1.5){\scriptsize $C$};
\node[circle,draw,fill=gray!50,inner sep=0.2pt] (13) at (.5,-1){\scriptsize $D$};
\draw (11)--(12)--(13)--(11);
\draw (1)--(2)--(4)--(6)--(2)--(3)--(7)--(4)--(5)--(6)--(1);
\draw[dashed] (1)--(3);
\draw (8)--(9)--(10)--(8);
\end{tikzpicture}
\end{center}
\caption{}
\end{subfigure}
\\

\begin{subfigure}[b]{0.45\textwidth}
\begin{center}
\begin{tikzpicture}
\draw[fill=white] (0,1) to (-2,-2.5) to (0.3,0) to (0,-2.2) to (-1.1,-2.1) --(3,-3)--(0,1);
\draw[pattern=dots,pattern color=white] (0,1) to (-2,-2.5) to (0.3,0) to (0,-2.2) to (-1.1,-2.1) --(3,-3)--(0,1);
\draw[fill=white] (-3,-3) to (-2,-2.5) to (0,-2.2) to(-3,-3);
\draw[pattern=dots,pattern color=white] (-3,-3) to (-2,-2.5) to (0,-2.2) to(-3,-3);
\draw[fill=black!30!green] (.5,-1.9) to (1.2,-1.5) to (.5,-1) to (.5,-1.9);
\draw[fill=black!30!green] (0,1) to (-3,-3) to (-2,-2.5) to (0,1);
\draw[fill=black!30!green]  (-3,-3) to (-0,-2.2) to (1.5,-2.3)to (3,-3)to(-3,-3);
\draw[fill=black!30!green] (0,-2.2) to (0.3,0) to (-2,-2.5) to (0,-2.2);
\draw[fill=white] (-0.5,-1.8) to (-.1,-.8) to (-1.1,-2.1) to (-0.5,-1.8);
\draw[pattern=dots,pattern color=white] (-0.5,-1.8) to (-.1,-.8) to (-1.1,-2.1) to (-0.5,-1.8);

\node[circle,draw,fill=gray!50,inner sep=0.2pt] (1) at (0,1){\footnotesize $1$};
\node[circle,draw,fill=gray!50,inner sep=0.2pt] (2) at (-3,-3){\footnotesize $2$};
\node[circle,draw,fill=gray!50,inner sep=0.2pt] (3) at (3,-3){\footnotesize $3$};
\node[circle,draw,fill=gray!50,inner sep=0.2pt] (4) at (-0,-2.2){\footnotesize $4$};
\node[circle,draw,fill=gray!50,inner sep=0.2pt] (5) at (0.3,0){\footnotesize $5$};
\node[circle,draw,fill=gray!50,inner sep=0.2pt] (6) at (-2,-2.5){\footnotesize $6$};
\node[circle,draw,fill=gray!50,inner sep=0.2pt] (7) at (1.5,-2.3){\tiny $A$};
\node[circle,draw,fill=gray!50,inner sep=0.2pt] (8) at (-0.5,-1.8){\footnotesize $8$};
\node[circle,draw,fill=gray!50,inner sep=0.2pt] (9) at (-.1,-.8){\footnotesize $9$};
\node[circle,draw,fill=gray!50,inner sep=0.2pt] (10) at (-1.1,-2.1){\footnotesize $7$};
\node[circle,draw,fill=gray!50,inner sep=0.2pt] (11) at (.5,-1.9){\scriptsize $B$};
\node[circle,draw,fill=gray!50,inner sep=0.2pt] (12) at (1.2,-1.5){\scriptsize $C$};
\node[circle,draw,fill=gray!50,inner sep=0.2pt] (13) at (.5,-1){\scriptsize $D$};

\draw (1)--(2)--(4)--(6)--(2)--(3)--(7)--(4)--(5)--(6)--(1);
\draw[dashed] (1)--(3);
\draw (8)--(9)--(10)--(8);
\end{tikzpicture}
\end{center}
\caption{}
\end{subfigure}
\begin{subfigure}[b]{0.45\textwidth}
\begin{center}
\begin{tikzpicture}
\draw[fill=white] (0,1) to (-2,-2.5) to (0.3,0) to (0,-2.2) to (-1.1,-2.1) --(3,-3)--(0,1);
\draw[pattern=dots,pattern color=white] (0,1) to (-2,-2.5) to (0.3,0) to (0,-2.2) to (-1.1,-2.1) --(3,-3)--(0,1);
\draw[fill=white] (-3,-3) to (-2,-2.5) to (0,-2.2) to(-3,-3);
\draw[pattern=dots,pattern color=white] (-3,-3) to (-2,-2.5) to (0,-2.2) to(-3,-3);
\draw[fill=black!30!green] (.5,-1.9) to (1.2,-1.5) to (.5,-1) to (.5,-1.9);
\draw[fill=black!30!green] (0,1) to (-3,-3) to (-2,-2.5) to (0,1);
\draw[fill=black!30!green]  (-3,-3) to (-0,-2.2) to (1.5,-2.3)to (3,-3)to(-3,-3);
\draw[fill=black!30!green] (0,-2.2) to (0.3,0) to (-2,-2.5) to (0,-2.2);
\draw[fill=white] (-0.5,-1.8) to (-.1,-.8) to (-1.1,-2.1) to (-0.5,-1.8);
\draw[pattern=dots,pattern color=white] (-0.5,-1.8) to (-.1,-.8) to (-1.1,-2.1) to (-0.5,-1.8);

\node[circle,draw,fill=gray!50,inner sep=0.2pt] (1) at (0,1){\footnotesize $1$};
\node[circle,draw,fill=gray!50,inner sep=0.2pt] (2) at (-3,-3){\footnotesize $2$};
\node[circle,draw,fill=gray!50,inner sep=0.2pt] (3) at (3,-3){\footnotesize $3$};
\node[circle,draw,fill=gray!50,inner sep=0.2pt] (4) at (-0,-2.2){\footnotesize $4$};
\node[circle,draw,fill=gray!50,inner sep=0.2pt] (5) at (0.3,0){\footnotesize $5$};
\node[circle,draw,fill=gray!50,inner sep=0.2pt] (6) at (-2,-2.5){\footnotesize $6$};
\node[circle,draw,fill=gray!50,inner sep=0.2pt] (7) at (1.5,-2.3){\tiny $A$};
\node[circle,draw,fill=gray!50,inner sep=0.2pt] (8) at (-0.5,-1.8){\footnotesize $8$};
\node[circle,draw,fill=gray!50,inner sep=0.2pt] (9) at (-.1,-.8){\footnotesize $9$};
\node[circle,draw,fill=gray!50,inner sep=0.2pt] (10) at (-1.1,-2.1){\footnotesize $7$};
\node[circle,draw,fill=gray!50,inner sep=0.2pt] (11) at (.5,-1.9){\scriptsize $B$};
\node[circle,draw,fill=gray!50,inner sep=0.2pt] (12) at (1.2,-1.5){\scriptsize $C$};
\node[circle,draw,fill=gray!50,inner sep=0.2pt] (13) at (.5,-1){\scriptsize $D$};
\node[circle,draw,fill=gray!50,inner sep=0.2pt] (14) at (0.5,-2.7){\scriptsize $E$};

\draw (1)--(2)--(4)--(6)--(2)--(3)--(7)--(4)--(5)--(6)--(1);
\draw[dashed] (1)--(3);
\draw (8)--(9)--(10)--(8);
\draw (2)--(14)--(4);
\draw (3)--(14)--(7);
\draw (5)--(9);
\draw (4)--(8);
\draw (9)--(6)--(10);
\draw (9)--(4)--(10);
\end{tikzpicture}
\end{center}
\caption{}
\end{subfigure}
\caption{Illustration of the procedure in the proof of Lemma \ref{lem:theBigBounds} item 2. (a) An Eulerian planar graph $G$. (b) An embedding of $G\cup [123]$. The edge $13$ is dashed to emphasize that it is not in $G$. (c) A green / white coloring of the faces. (d) A triangulation of the green area with one internal vertex $E$. The constructed set is $S$ from Figure \ref{fig:example}. It is clear that this procedure is reversible, so every admissible set $S\in \mathcal S(P)$ can be constructed.}
\label{fig:procedure}
\end{figure}
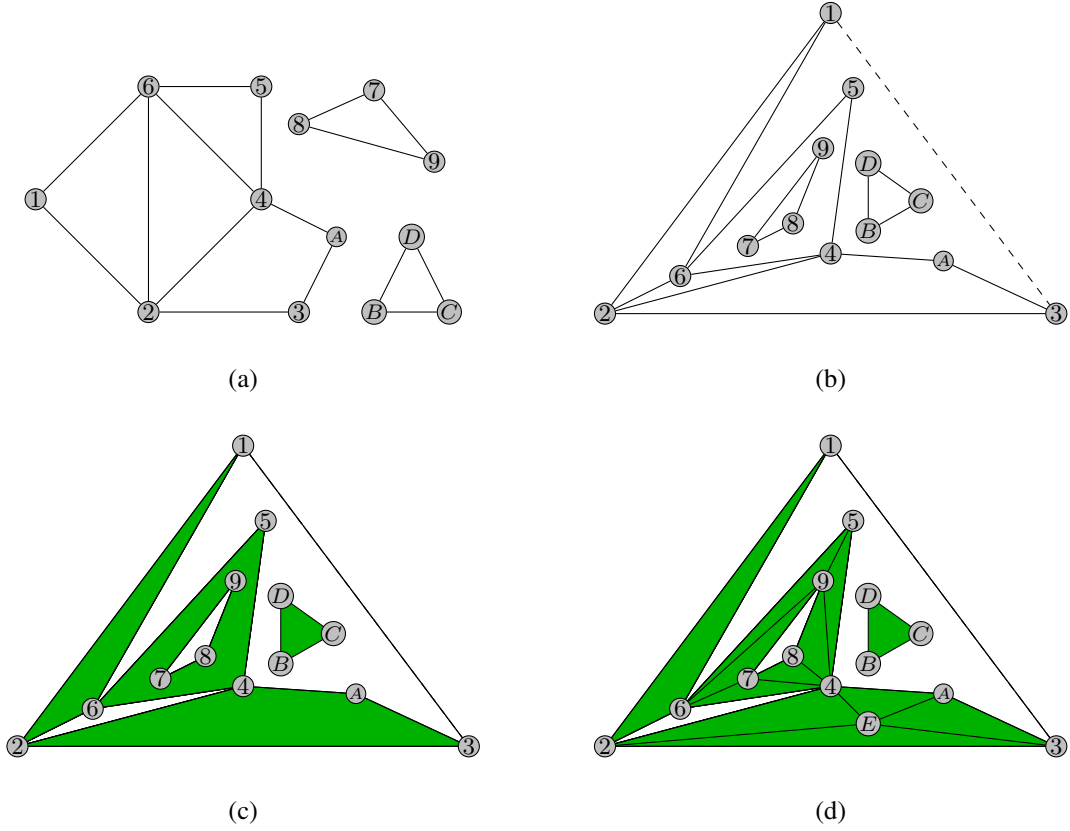

It remains to bound the number of choices made during the procedure by $v_I!\gamma^{v_I}k^{O(v_\partial)}.$ 
First, There are $(v_\partial+3)!\exp{[O(v_\partial)]}\le k^{O(v_\partial)}$ planar graphs with $v_\partial+3$ vertices \cite{planar_enum}. 
Second, we crudely bound by $v_\partial^{O(v_\partial)}$ the number of planar embeddings of the $(v_\partial+3)$-vertex graph $G\cup[123]$ in which $[123]$ bounds the outer face. Indeed, any embedding can be realized by the following scheme.
\begin{enumerate}[(I)]
\item Order the connected components $C_1,...,C_m$ of $G \cup [123]$, where $C_1$ is the component of $[123]$. 
\item Embed $C_1$ such that $[123]$ bounds the outer face.
\item In every step $1<i\le m$: choose an inner face of the currently embedded graph $G_{i-1}=C_1\cup\cdots\cup C_{i-1}$, and embed $C_i$ inside a disk that is contained in the chosen face but does not intersect the embedded $G_{i-1}$. 
\end{enumerate}

We claim that there are at most $v_\partial^{O(v_\partial)}$ possible schemes. This holds for item (I) immediately. 
In addition, we choose a planar embedding of every component $C_i$. Note that the embedding of a connected graph into the plane is determined by choosing the {oriented boundary of the outer face} and a {cyclic ordering of the neighbors of every vertex}, that is also known as a {\em rotation system} \cite{graphs_on_surfaces}. Moreover, listing all the desired oriented outer faces of all the components can be done in at most $(v_\partial +3)!$ ways since each vertex belongs to at most one outer face. In addition, the number of rotation systems is at most $(2|E|)!=v_\partial^{O(v_\partial)}$ where $E$ is edge set of $G\cup[123]$. In item (III) we select a face $f$ of $G_{i-1}$ and embed $C_i$ in a disk that is contained in $f$ and does not intersect $G_{i-1}$. All such disks yield equivalent embeddings so we only need to choose the face $f$, and there are at most $O(v_\partial)$ faces and only $m\le v_\partial$ steps. 

Note that in order to realize a specific embedding, one may need to select the ordering of $C_1,...,C_m$ carefully. For instance, in order to create an annulus shaped area whose outer and inner cycles are $C_i$ and $C_j$ respectively, the ordering must satisfy $i<j$.  

Third, once $G\cup[123]$ is embedded in the plane, there are only two ways to color its faces. 
Finally, the number of ways to triangulate the green part with $v_I$ internal vertices is at most $v_I!\gamma^{v_I}k^{O(v_\partial)}$, by the same argument as in item $1.$ Namely, every triangulation of the green part with $v_I$ internal vertices induces a triangulation of $[123]$ with $v_\partial+v_I$ internal vertices, by adding it to a fixed triangulation of the white part.
\end{proof}

We conclude this section with the proof of Lemma \ref{lem:key_lemma}.
\begin{proof}[Proof of Lemma \ref{lem:key_lemma}.]
Recall that  $p=c/\sqrt{n}$ where $c>1/\sqrt{\gamma}$. Denote $\lambda := 1/(\gamma c^2)<1$ thus $p^{-2}=\lambda\gamma n$. By Claim \ref{clm:trick},
\begin{align}
\nonumber
\frac{\Delta}{\mu^2} ~\le~& \frac{1}{|\mathcal T|^2} \sum_{\substack{S\subset \binom {[n]}3\\0<|S|<2k+1}} p^{-|S|}|\{T\in \mathcal T~:~T\supset S \}|^2\\
~=~& 
\label{aleq:first}
\frac{k^{O(1)}}{(\gamma n)^{2k}} \sum_{P\in\mathcal P}\sum_{\substack{S:\\P(S)=P}} p^{-|S|}|\{T\in \mathcal T~:~T\supset S \}|^2\\
~\le~& 
\label{aleq:3}
 \frac{k^{O(1)}}{(\gamma n)^{2k}} \sum_{P\in\mathcal P}\frac{n^{v_\partial+v_I}}{v_I!}\sum_{S\in\mathcal{S}(P)} p^{-|S|}
 \left(\frac{n^{v_O}}{v_O!}|\mathcal T_k(S)|\right)^2\\
 ~\le~&
 \label{aleq:4}
  \frac{1}{(\gamma n)^{2k}} \sum_{P\in\mathcal P}k^{O(v_\partial)}\frac{n^{v_\partial+v_I}}{v_I!}\sum_{S\in\mathcal{S}(P)} (\lambda \gamma n)^{v_\partial+v_I+\Phi} (\gamma n)^{2v_O}
  \\
  \label{aleq:5}
 ~=~&
  \frac{1}{(\gamma n)^{2k}} \sum_{P\in\mathcal P}
k^{O(v_\partial)}(\lambda \gamma n)^{\Phi}
\lambda^{v_\partial+v_I}
  \frac{n^{v_\partial+v_I}}{v_I!}|\mathcal{S}(P)| (\gamma n)^{v_\partial+v_I+2v_o}
  \\
  \label{aleq:6}
 ~\le~&
    \sum_{P\in\mathcal P}k^{O(v_\partial)}
(\lambda \gamma n)^{\Phi}
\lambda^{v_\partial+v_I}(\gamma n)^{2(v_\partial+v_I+v_O)-2k}
\\
\nonumber
   ~=~&
    \sum_{P\in\mathcal P}k^{O(v_\partial)}
(\lambda \gamma n)^{\Phi}
\lambda^{v_\partial+v_I}.
\end{align}
The first two transitions (\ref{aleq:first}) and (\ref{aleq:3}) are derived by taking into account the canonical $n$-labellings of all considered triangulations. For instance, Lemma \ref{lem:manysimple} asserts that the number of $k/2$-simple triangulations with $k$ labeled internal vertices is $k!\gamma^k/k^{O(1)}$. Therefore, $|\mathcal T|\ge (\gamma n)^k/k^{O(1)}$ since there are $\binom {n-3}k\approx n^k/k!$ proper $n$-labellings of the $k$ vertices. In addition, an arbitrary set of faces $S$ having a tuple of parameters  $P(S)=P$ is obtained by choosing a set in $\mathcal S(P)$ and a proper $n$-labeling for the $v_\partial + v_I$ vertices. Similarly, a triangulation $S\subset T\in\mathcal T$ is comprised of a triangulation in $\mathcal T_k(S)$ and a proper $n$-labeling of the $v_O$ outer vertices.
We obtain (\ref{aleq:4}) by applying the second part of Claim \ref{clm:phieuler} and the first part of Lemma \ref{lem:theBigBounds}. Note that we group all the $k^{O(v_\partial)}$ factors together.
Equation (\ref{aleq:5}) is straightforward and inequality 
(\ref{aleq:6}) follows from the second part of Lemma \ref{lem:theBigBounds}.

The proof is concluded by splitting the summation to the three following cases. The first part of Claim \ref{clm:phieuler} shows that these cases indeed cover the entire summation. The fact that the size of $\mathcal P$ is of order $k^{O(1)}$ is important to the analysis of all the cases below.
\begin{enumerate}
\item {\em $v_\partial\ge 13$.} 
By the third item of Claim \ref{clm:phieuler}, $\Phi\le 1 - \frac {v_\partial}6$. Consequently, the contribution of this case to the sum is bounded by 
\[
k^{O(v_\partial)} (\lambda \gamma n)^{1-v_\partial/6} \le
\lambda\gamma n \left(\frac{k^{O(1)}}{n^{1/6}} \right)^{v_\partial}\le \frac 1n.
\]

\item {\em $\Phi\le -1/2$ and $v_\partial< 13$.} Since $|\mathcal P|=k^{O(1)}$ and $v_\partial$ is bounded, the contribution of this case to the sum is at most
$$
k^{O(1)}\cdot (\lambda \gamma n)^{-1/2}\le \frac{(\log n)^{O(1)}}{\sqrt{n}}.
$$
\item {$\Phi=0$ and $v_\partial+v_I>k/2$}. By the proof of Claim \ref{clm:phieuler}, $\Phi=0$ only if $v_\partial \le 3$. Therefore the contribution of this case to the sum is bounded by $$k^{O(1)}\lambda^{k/2} \le \frac 1n,$$ since $k\ge \log^2 n$.
\end{enumerate}
\end{proof}
\begin{remark}
\begin{enumerate}
\item Note that the contribution of the third case to the sum is negligible only because we restrict to $k/2$-simple triangulations. This is the part of the argument that breaks down when all triangulations are considered as we saw in Section \ref{sec:sketch}.
\item The choice $k=\log^2n$ could be replaced by any $C\log n < k < n^{1/C}$  for a sufficiently large constant $C$.
\end{enumerate}
\end{remark}

\section{Discussion and open questions}
\label{sec:open}
\begin{itemize}
\item The main open question that this paper suggests is Conjecture \ref{conj:main}. In fact, any lower bound of $p=\Omega(1/\sqrt{n})$ for the threshold probability for simple connectivity would be of great interest. The main challenge is that a contraction of a cycle that is not a triangulation can use a face of $Y_2(n,p)$ more than one time. In consequence, contractions of a given cycle are tricky combinatorial objects, and it is not clear how to bound the probability that they appear in $Y_2(n,p)$. Babson, Hoffman and Kahle solved this problem by showing that $Y_2(n,p)$ is a.a.s hyperbolic if $p<n^{-1/2-\varepsilon}$ ~\cite{BHK}. Can their bound be derived by a pure combinatorial argument?

\item It is natural to compare the random group $\pi_1(Y_2(n,p))$ with {\.Z}uk's random triangular model \cite{Zuk}. This group has $n$ generators and a random set of relations of length $3$. The vanishing of the random triangular group admits a sharp threshold, but the precise threshold probability is not known \cite{ALS,AFL}. Similarly, proving that simple connectivity of $Y_2(n,p)$ admits a sharp threshold, even without knowing the precise threshold probability, is also of interest. On the other hand, it is intriguing to see whether our methods can be adapted to improve the known bounds for the vanishing of the random triangular group.

\item The proof of Theorem \ref{thm:main} implies that $p=1/\sqrt{\gamma n}$ is the sharp threshold probability for the property that every face of $Y=Y_2(n,p)$ belongs to a triangulated sphere in $Y$. What is the threshold probability for the analogous property in higher dimensions? What is the threshold probability if the sphere is replaced by some other manifold? 



\item Computational aspects of this problem can also be considered. A {\em certifier for simple connectivity} is an algorithm $\mathcal A$ whose input is a $2$-dimensional simplicial complex $Y$ and its output is {\em 'YES'} only if $Y$ is simply connected.
Let $c>1/\sqrt{\gamma}$. We ask: is there a {\em polynomial time} certifier for simple connectivity whose probability to output {\em 'YES'} on a $Y_2(n,c/\sqrt{n})$-distributed input is $1-o(1)$?

Note that our proof of Theorem \ref{thm:main} yields a certifier that runs in quasipolynomial time of $n^{O(\log n)}$ by going over all the triangulations with $k=C\log n$ internal vertices of all the cycles of length $3$. In comparison, we observe that the answer is positive if $c>1/2$ since the previous bound for simple connectivity of $Y_2(n,p)$ is based on a polynomial time certifier~\cite{KPS}.

\end{itemize}

\end{document}